\newtheorem{thm}{Theorem}
\newtheorem{prp}{Proposition}
\newtheorem{claim}{Claim}
\newtheorem{lemma}{Lemma}
\newtheorem{conj}{Conjecture}
\title{\textbf{Restricting directions for Kakeya sets}}
\author{Anthony Gauvan}
\begin{document}

\maketitle

\begin{abstract}

We prove that the \textit{Kakeya maximal conjecture} is equivalent to the \textit{$\Omega$-Kakeya maximal conjecture}. This completes a recent result in \cite{KELETI} where Keleti and Mathé proved that the \textit{Kakeya conjecture} is equivalent to the \textit{$\Omega$-Kakeya conjecture}. Moreover, we improve concrete bound on the Hausdorff dimension of a $\Omega$-Kakeya set : for any Bore set $\Omega$ in $\mathbb{S}^{n-1}$, we prove that if $X \subset \mathbb{R}^n$ contains for any $e \in \Omega$ a unit segment oriented along $e$ then we have $$d_X \geq \frac{6}{11}d_\Omega +1$$ where $d_E$ denotes the Hausdorff dimension of a set $E$.
\end{abstract}

\section{Introduction}

The \textit{Kakeya} problem is a central question in harmonic analysis which can be formulated in different ways ; it is also related to \textit{restriction} theory and arithmetic. A measurable set $X$ in $\mathbb{R}^n$ is said to be a \textit{Kakeya} set if for any direction $e \in \mathbb{S}^{n-1}$ it contains a unit segment $T_e$ oriented along $e$. The Kakeya conjecture concerns the Hausdorff dimension of Kakeya set $X$.

\begin{conj}[Kakeya conjecture]\label{CONJ1} If $X$ is a Kakeya set in $\mathbb{R}^n$ then $$d_X = d_{\mathbb{S}^{n-1}} + 1 = n.$$
\end{conj}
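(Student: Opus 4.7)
The plan is to establish the lower bound $d_X \geq n$; the matching upper bound is immediate since any subset of $\mathbb{R}^n$ has Hausdorff dimension at most $n$. By a standard discretisation I would first reduce to the Kakeya maximal function statement: it suffices to prove that for every $\epsilon > 0$, every $\delta \in (0,1)$, and every family $\{T_j\}_{j \in J}$ of $\delta$-tubes whose directions form a $\delta$-separated subset of $\mathbb{S}^{n-1}$, one has
\[
\Bigl\| \sum_j \mathbf{1}_{T_j} \Bigr\|_{L^{n/(n-1)}(\mathbb{R}^n)} \lesssim_\epsilon \delta^{-\epsilon} \Bigl( \sum_j |T_j| \Bigr)^{(n-1)/n}.
\]
A level-set decomposition reduces this in turn to controlling, for each dyadic value $\lambda$, the Lebesgue measure of the set $\{x : \sum_j \mathbf{1}_{T_j}(x) \geq \lambda\}$, and a Frostman/slicing argument then translates the resulting bound back into the desired Hausdorff dimension statement for $X$.

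On each level set I would apply the broad--narrow dichotomy of Bourgain--Guth: at every point $x$ one either finds $n$ contributing tubes whose directions are quantitatively transverse (the broad case), or all contributing directions cluster near a proper subspace (the narrow case). The broad case is handled directly by the multilinear Kakeya inequality of Bennett--Carbery--Tao, which already yields the conjectured exponent. For the narrow case I would iterate the dichotomy after applying Guth's polynomial partitioning: one chooses an algebraic hypersurface $Z$ of controlled degree that equidistributes the $L^{n/(n-1)}$ mass into cells, inducts on scale inside each cell, and handles separately the tubes trapped in a thin neighbourhood of $Z$ via a codimension-one induction combined with a hairbrush/planebrush structural analysis.

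The decisive obstacle --- and the reason the statement is stated as a conjecture --- sits precisely at the tubes-on-variety step. To push the exponent all the way to the optimal $n$ one must rule out a configuration of roughly $\delta^{-(n-1)}$ tubes whose directions concentrate $\delta^{-\epsilon}$-efficiently on a low-degree variety while the tubes themselves sit in a $\delta^{1/2}$-neighbourhood of that variety. Currently available grains and SL-type estimates (Wolff, Katz--Tao, Katz--Zahl) only exclude this configuration for dimensions strictly below $n$, which is what produces the known partial bounds. The hard part of the plan is therefore to supply an incidence or sum--product inequality on algebraic varieties that degrades only polynomially in $\deg(Z)$ and is sharp at the endpoint; once such an input is available, the polynomial-partitioning induction on scale closes and the bound $d_X \geq n$ follows.
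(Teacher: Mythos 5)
The statement you were given is Conjecture \ref{CONJ1} of the paper, i.e.\ the Kakeya conjecture itself. The paper does not prove it and does not claim to: it is an open problem for $n \geq 3$, and the paper only records Davies' solution in the plane and the known partial bounds $d_X \geq (\tfrac{1}{2}+\epsilon_n)(n-1)+1$; all of the paper's actual theorems are equivalences (Kakeya maximal conjecture $\Rightarrow$ $\Omega$-Kakeya maximal conjecture) or partial bounds ($d_X \geq \tfrac{6}{11}d_\Omega + 1$), conditional on or weaker than this conjecture. So there is no proof in the paper against which your attempt can be matched.

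Your proposal, accordingly, is not a proof, and you effectively concede this in your own last paragraph. The opening reduction already replaces the target by a strictly stronger open statement: bounding $\bigl\| \sum_j \mathbf{1}_{T_j} \bigr\|_{L^{n/(n-1)}}$ with an $\epsilon$-loss is exactly the Kakeya maximal conjecture ($\|K_\delta\|_{\sigma,n} \lesssim_\epsilon \delta^{-\epsilon}$), which implies the dimension statement but is not known. The broad--narrow dichotomy plus multilinear Kakeya does handle the transverse case, and polynomial partitioning organizes the rest, but the induction does not close: the configuration of tubes clustered in a thin neighbourhood of a low-degree variety is precisely where every known technique (hairbrush, sum--difference, grains, planebrush) loses a power of $\delta$ and lands strictly below the exponent $n$. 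Your closing step --- ``once such an input is available, the polynomial-partitioning induction on scale closes'' --- makes the whole argument conditional on an estimate that nobody has proved and that constitutes the open problem itself. What you have written is a competent survey of the modern attack and a correct diagnosis of where it gets stuck; as a proof of the statement it has an irreparable gap, namely the theorem it would need to invoke does not exist.
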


This conjecture has been proved by Davies in the plane in \cite{DAVIES}. For $n \geq 3$, a vast amount of techniques have been developed in order to tackle this issue ; we invite the reader to look at \cite{KT} or \cite{KTL} to see the extent of the techniques that might be deployed. Here, we will simply say that, specialists are able to prove that if $X$ is a Kakeya set in $\mathbb{R}^n$ then $$d_X \geq (\frac{1}{2} + \epsilon_n) d_{ \mathbb{S}^{n-1}} +1 $$ where $\epsilon_n > 0$ is a dimensional constant. There exists a more quantitative version of the Kakeya conjecture and we need to introduce the \textit{Kakeya maximal operator} to state it. We define the \textit{Kakeya maximal function} $$K_\delta f : \mathbb{S}^{n-1} \rightarrow \mathbb{R}_+$$ at scale $\delta > 0$ of a locally integrable function $f : \mathbb{R}^n \rightarrow \mathbb{R}$ as $$ K_\delta f(e) := \sup_{a \in \mathbb{R}^n} \frac{1}{\left|T_{e,\delta}(a)\right|} \int_{ T_{e,\delta}(a) }  |f|(x) dx$$ where $e\in \mathbb{S}^{n-1}$ and $T_{e,\delta}(a)$ stands for the tube in $\mathbb{R}^n$ with center $a$, oriented along the direction $e$, of length $1$ and radius $\delta$. It appears that any quantitative information on $\|K_\delta\|_{\sigma,p}$ provides lower bound on the dimension of any Kakeya set $X$ : for any $1 < p < \infty$ and $ \beta > 0$ such that $n-\beta p>0$, if we have $$\|K_\delta \|_{\sigma,p} \lesssim_{n,p,\beta} \delta^{-\beta}$$ then the Hausdorff dimension of any Kakeya set in $\mathbb{R}^n$ is at least $n-\beta p$. In regards of this fact, the following conjecture is called the \textit{Kakeya maximal conjecture}, it is stronger than the Kakeya conjecture.

\begin{conj}[Kakeya maximal conjecture]
For any $\epsilon>0$ we have $$\|K_\delta\|_{\sigma,n} \lesssim_{n,\epsilon} \delta^{-\epsilon}.$$
\end{conj}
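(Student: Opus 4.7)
The plan is to attack $\|K_\delta\|_{\sigma,n} \lesssim_\epsilon \delta^{-\epsilon}$ by reducing to a discrete incidence problem on $\delta$-tubes. First, by standard linearization and $L^p$-duality, the estimate is equivalent to an inequality of the form $\|\sum_{T \in \mathcal{T}} \mathbf{1}_T\|_{n'} \lesssim_\epsilon \delta^{n/n'-\epsilon} |\mathcal{T}|^{1/n'}$, where $\mathcal{T}$ is any collection of $\delta \times 1$ tubes with $\delta$-separated orientations in $\mathbb{S}^{n-1}$ and $n'$ is the exponent conjugate to $n$. This is a purely geometric overlap bound and is the version on which every known attack proceeds.

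Next I would perform a two-ends reduction and pigeonhole so that $|T_1 \cap T_2|$ lives at essentially one scale for every pair of intersecting tubes, and then attempt a broad/narrow dichotomy in the spirit of Bourgain--Guth. At points of the \emph{broad} set many tubes through $x$ point in $k$-transverse directions, so the multilinear Kakeya inequality of Bennett--Carbery--Tao gives the sharp overlap estimate; at points of the \emph{narrow} set all tubes cluster near a low-dimensional affine subspace, and one would iterate Guth's polynomial partitioning, cutting $\mathbb{R}^n$ with a polynomial of degree $\sim \delta^{-\epsilon}$ and running induction on scales, feeding any dimensional gain back into the broad/narrow split.

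The main obstacle, and the reason the conjecture is still open for $n \geq 3$, lies entirely in the narrow case: stacks of $\delta$-tubes concentrated near a hypersurface can mimic a lower-dimensional Kakeya configuration, and neither multilinear Kakeya nor polynomial partitioning alone excludes near-extremal \emph{planky} configurations of this kind. A successful proof would need a genuinely new ingredient --- for instance a refined grains decomposition, or a quantitative way to exploit the fact that $\delta$-separated orientations must produce true transversality at \emph{some} intermediate scale along every tube --- so that the narrow alternative can always be iterated into a strictly lower-dimensional subproblem. I would expect the bulk of the work to consist in designing a rigorous induction-on-scales scheme which at each step either extracts a gain from multilinear transversality or descends to Kakeya in one fewer dimension, closing up on the base case $n=2$ handled by Córdoba.
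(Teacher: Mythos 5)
The statement you were asked to prove is the \emph{Kakeya maximal conjecture} itself, which the paper states as a conjecture precisely because it is open for $n \geq 3$; the paper contains no proof of it and instead treats it as a hypothesis (notably in Theorem \ref{C2}, which shows that it \emph{implies} the $\Omega$-Kakeya maximal conjecture). Your proposal is a sensible survey of the modern toolkit --- linearization and duality reducing to a tube-overlap inequality, a two-ends reduction, the Bourgain--Guth broad/narrow dichotomy, multilinear Kakeya, and polynomial partitioning --- but it is not a proof. You yourself locate the obstruction in the narrow case (tubes clustering near a hypersurface, ``planky'' near-extremal configurations) and state that a genuinely new ingredient would be required to close the induction on scales. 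That missing ingredient is the entire content of the conjecture, so the gap is real and, at present, unavoidable: no argument along these lines is known to terminate for $n \geq 3$.

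Two further points of calibration against the paper. First, the only dimension in which the claimed bound is actually known is $n=2$ (C\'{o}rdoba's estimate $\|K_\delta\|_{\sigma,2} \lesssim_\epsilon \delta^{-\epsilon}$), and the paper invokes exactly that known case, combined with Theorem \ref{C3}, to recover Mitsis's planar result; your sketch correctly identifies $n=2$ as the base case but supplies no mechanism for the inductive step. Second, the paper's own unconditional contribution (Theorem \ref{THM0}) deliberately sidesteps the maximal conjecture and instead runs Bourgain's arithmetic (sum-difference plus Heath-Brown) argument to get the partial exponent $\tfrac{6}{11}d_\Omega + 1$; if your goal were an unconditional statement rather than the full conjecture, that is the route the paper takes.
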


In this text, we are concerned with a natural generalization of the Kakeya problem. Given an arbitrary Borel set of directions $\Omega \subset \mathbb{S}^{n-1}$, we say that a set $X$ in $\mathbb{R}^n$ is a \textit{$\Omega$-Kakeya} set if for any $e \in \Omega$ there exists a unit segment $T_e$ oriented along $e$ included in $X$. What can be said about the dimension of a $\Omega$-Kakeya set ? The following conjecture seems plausible.

\begin{conj}[$\Omega$-Kakeya conjecture]
For any Borel set $\Omega$ in $\mathbb{S}^{n-1}$ ; if $X$ is a $\Omega$-Kakeya set then $$d_X \geq d_\Omega +1.$$ 
\end{conj}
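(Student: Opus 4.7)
The natural plan is a two-step reduction: first discretize the $\Omega$-Kakeya set to a Kakeya-type volume estimate for families of $\delta$-tubes with directions in $\Omega$, then derive that estimate from the classical Kakeya maximal conjecture. The abstract already mentions that Keleti--Mathé have shown the $\Omega$-Kakeya conjecture is equivalent to the classical Kakeya conjecture, so the second step is in principle already carried out; the proposal below can be read as a detailed blueprint of their reduction.

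\textbf{Step 1 (Discretization).} Fix $\eta > 0$ and a small scale $\delta > 0$. I would choose a maximal $\delta$-separated subset $\Omega_\delta \subset \Omega$, so that by the definition of Hausdorff dimension $|\Omega_\delta| \gtrsim \delta^{-d_\Omega + \eta}$ along a sequence $\delta \to 0$. For each $e \in \Omega_\delta$ pick a unit segment $T_e \subset X$ in direction $e$ and let $T_e^\delta$ denote its $\delta$-neighborhood. Because $N_\delta(X) \gtrsim \delta^{-n}\bigl|\bigcup_e T_e^\delta\bigr|$ where $N_\delta$ is the $\delta$-covering number, the dimension bound $d_X \geq d_\Omega + 1$ follows once one establishes the single volume estimate
$$\Bigl|\bigcup_{e \in \Omega_\delta} T_e^\delta\Bigr| \gtrsim_\eta \delta^{\, n - d_\Omega - 1 - \eta}.$$

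\textbf{Step 2 (From unrestricted Kakeya to the $\Omega$-version).} To secure this volume estimate I would set up a Keleti--Mathé style slicing argument. Let $k = \lceil d_\Omega \rceil$. Using Marstrand-type projection theorems, select a $(k+1)$-dimensional subspace $V \subset \mathbb{R}^n$ for which the orthogonal projection $\pi_V$ sends a substantial piece of $\Omega$ to an essentially $d_\Omega$-dimensional subset of the unit sphere of $V$. The images $\pi_V(T_e^\delta)$ form a family of $\delta$-tubes in $V$ whose directions fill an essentially $d_\Omega$-dimensional, and hence (since $d_\Omega$ is close to the ambient dimension $k+1$) sphere-thick, subset of the sphere of $V$. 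The classical Kakeya maximal conjecture in $\mathbb{R}^{k+1}$ then yields a lower bound for $|\bigcup_e \pi_V(T_e^\delta)|$ of the right order, which pulls back to the desired lower bound for $|\bigcup_e T_e^\delta|$ via the elementary inequality $|\pi_V(E)| \leq |E| \cdot \delta^{-(n-k-1)}$ applied to the $\delta$-thickening.

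\textbf{Main obstacle.} Step 2 is conditional on the classical Kakeya maximal conjecture in dimensions up to $n$, which is the central open problem of the area in $n \geq 3$. In the plane, Davies' theorem makes both steps unconditional and delivers $d_X \geq d_\Omega + 1$ for every planar $\Omega$-Kakeya set; above dimension two the statement must remain conditional by the Keleti--Mathé equivalence. Any direct attack avoiding the classical conjecture would have to control the multiplicity of overlap of the tubes $T_e^\delta$ in the absence of a full sphere of directions --- precisely the obstruction that the author's $\tfrac{6}{11} d_\Omega + 1$ bound confronts only partially, and which represents the genuine mathematical difficulty behind the statement.
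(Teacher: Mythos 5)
You are attempting to prove a statement that the paper itself does not prove and cannot prove: the $\Omega$-Kakeya conjecture is stated as an open conjecture. What the paper establishes is (i) the conditional implication ``Kakeya maximal conjecture $\Rightarrow$ $\Omega$-Kakeya maximal conjecture'' (Theorem \ref{C2}, via the norm comparison $\|K_\delta\|_{\mu,p} \lesssim \delta^{-(n-d-1)/p}\|K_\delta\|_{\sigma,p}$ of Theorem \ref{C3}), (ii) Proposition \ref{THMD}, which converts maximal estimates with respect to a Frostman measure $\mu$ into Hausdorff dimension bounds, and (iii) the unconditional partial bound $d_X \geq \frac{6}{11}d_\Omega + 1$ via Bourgain's arithmetic method. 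Your proposal, being explicitly conditional on the classical Kakeya maximal conjecture, can at best be an alternative route to the conditional results, so it should be presented as such; it is not a proof of the conjecture.

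Even judged as a conditional argument, there are two genuine gaps. First, your Step 1 conflates Minkowski and Hausdorff dimension: a lower bound on the covering number $N_\delta(X)$ at a \emph{single} scale $\delta$, obtained from the volume of $\bigcup_e T_e^\delta$, only bounds the box dimension of $X$ from below. Hausdorff dimension allows coverings by balls of wildly different radii, and this is precisely the difficulty the paper's proof of Proposition \ref{THMD} is designed to overcome: one decomposes an arbitrary covering into dyadic scale classes $\mathcal{I}_k$, uses the pigeonhole sets $\Omega_k = \{e : |T_e \cap \bigcup_{i \in \mathcal{I}_k} B_i|_1 \geq \frac{1}{2k^2}\}$ with $\sum_k \frac{1}{k^2} < \infty$ to find directions whose segments are substantially covered at a single scale, and only then applies the maximal estimate at each scale $2^{-k}$ and sums. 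Without this multi-scale pigeonholing your reduction does not yield $d_X \geq d_\Omega + 1$ for Hausdorff dimension. Second, your Step 2 projection argument is both flawed and unnecessary. Flawed because with $k = \lceil d_\Omega \rceil$ the projected directions form a $d_\Omega$-dimensional subset of the $k$-dimensional unit sphere of $V$, and $d_\Omega \leq k$, so this subset is not ``sphere-thick''; if the Kakeya maximal estimate genuinely required a full sphere of directions, your argument would be circular, needing exactly the restricted-direction estimate you are trying to prove. Unnecessary because the dual (tube) form of the Kakeya maximal estimate in $\mathbb{R}^n$ applies to an \emph{arbitrary} $\delta$-separated family of $N$ directions: testing $K_\delta$ on $f = \mathbb{1}_E$ with $E = \bigcup_e T_e^\delta$ gives $N\delta^{n-1} \lesssim \|K_\delta f\|_{\sigma,n}^n \lesssim \delta^{-n\epsilon}|E|$, which is already the volume bound you want, with no projection and no fullness of directions. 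This is in spirit what the paper does, except that it works with the measure $\mu$ on the direction set (via $[\mu]_d \leq 1$ and Theorem \ref{C3}) rather than with counting $\delta$-separated points, which is what allows the $\epsilon$-losses to be transferred cleanly.
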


At least three questions can be asked. First, if we know that the Kakeya conjecture is true, can we say something about the $\Omega$-Kakeya conjecture ? Secondly, can we state a \textit{maximal} version of the $\Omega$-Kakeya conjecture ? Lastly, if there exists a $\Omega$-Kakeya maximal conjecture, what can we said about it given the Kakeya maximal conjecture ? Very recently, Keleti and Mathé gave a positive answer to the first question in \cite{KELETI}.

\begin{thm}[Keleti-Mathé]\label{CA}
If the Kakeya conjecture is true then the $\Omega$-Kakeya conjecture is also true.
\end{thm}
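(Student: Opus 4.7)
The strategy is to argue by contradiction, reducing an $\Omega$-Kakeya set of deficient dimension to an ordinary Kakeya set of deficient dimension in some auxiliary Euclidean space, thereby contradicting the Kakeya conjecture there. Writing $s = d_\Omega = (k-1) + \sigma$ with $k \in \mathbb{N}$ and $\sigma \in [0,1)$, I would split the target inequality $d_X \ge s+1$ into an integer part $k$ coming from a generic projection and a fractional part $\sigma$ coming from a transverse slicing argument, proceeding by induction on $\lfloor s\rfloor$ with Davies' two-dimensional Kakeya theorem as base case.

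For the integer contribution I would project $X$ onto a generic $k$-dimensional subspace $V \subset \mathbb{R}^n$. Marstrand's projection theorem simultaneously gives $d_{\pi_V(X)} = \min(d_X,k)$ and that the induced map $e \mapsto \pi_V(e)/|\pi_V(e)|$ sends $\Omega$ to a set $\Omega_V \subset S^{k-1}$ of Hausdorff dimension $\min(s,k-1) = k-1$. Provided $\Omega_V$ can be arranged to carry positive $\mathcal{H}^{k-1}$-mass on $S^{k-1}$, the set $\pi_V(X)$ becomes a genuine Kakeya set in $V \cong \mathbb{R}^k$, so the Kakeya conjecture in dimension $k$ forces $d_{\pi_V(X)} \ge k$, and hence $d_X \ge k$.

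To recover the fractional excess $\sigma$ I would invoke the inductive hypothesis on sets whose direction parameter has smaller integer part. After projecting to $V$, I slice $X$ by the fibers of $\pi_V$: Marstrand's slicing theorem guarantees that $\mathcal{H}^k$-typical slices carry dimension $\ge d_X - k$, and if these slices can be paired with a suitable subset of $\Omega$ giving them a restricted-direction Kakeya structure of parameter dimension $\sigma$ in the transverse $(n-k)$-dimensional affine space, the inductive bound provides slice dimension $\ge \sigma + 1$, adding the missing $\sigma$ to the integer bound and yielding $d_X \ge k + \sigma = s+1$. The principal obstacle is precisely this pairing: for $s<k$ a naive fiber of $\pi_V$ contains no original segment of the $\Omega$-Kakeya structure, since the directions contributing to a fiber must lie in $\Omega \cap V^\perp$, which has empty generic intersection. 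Circumventing this forces a finer joint parametrization of direction and base point — likely via a Frostman measure on $\Omega$ of exponent $s-\varepsilon$ combined with a careful transversality argument — so that the $\sigma$-mass of $\Omega$ is propagated through the slicing. One must also verify, at every induction step, that $\Omega_V$ really achieves full measure in $S^{k-1}$, rather than merely full dimension, since the Kakeya conjecture as stated sees only actual direction sets and not their dimensional approximants; this is where I expect the most delicate technical work to reside.
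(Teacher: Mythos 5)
The paper does not actually prove this statement: it is quoted from Keleti and Math\'e and the reader is referred to \cite{KELETI}, so the comparison is against their argument. Your proposal takes a genuinely different route (generic projection plus transverse slicing plus induction on $\lfloor d_\Omega\rfloor$), but it contains a gap that is structural rather than technical. The pivotal step --- ``provided $\Omega_V$ carries positive $\mathcal{H}^{k-1}$-mass, the set $\pi_V(X)$ becomes a genuine Kakeya set in $V\cong\mathbb{R}^k$, so the Kakeya conjecture forces $d_{\pi_V(X)}\ge k$'' --- does not go through. The Kakeya conjecture, as stated, applies only to sets containing a unit segment in \emph{every} direction of $\mathbb{S}^{k-1}$; a positive-measure, or even full-measure, set of directions is not all of $\mathbb{S}^{k-1}$, and the assertion that sets with segments in a positive-measure direction set have full dimension is itself an instance of the $\Omega$-Kakeya conjecture --- exactly the statement you are trying to derive. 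So this step is circular. The second half of your argument, the slicing that is supposed to supply the fractional part $\sigma$, is not yet an argument: as you acknowledge, a fiber of $\pi_V$ contains no segment of the original family, and the ``pairing'' you would need to endow slices with a restricted-direction Kakeya structure is left unresolved. (There are further, smaller problems: a projected segment has length $|\pi_V(e)|$, which degenerates for $e$ near $V^\perp$; and in the case $\sigma=0$, i.e. $d_\Omega=k-1$ exactly, no Marstrand-type statement yields positive $\mathcal{H}^{k-1}$-measure for the projected direction set in the first place.)

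For contrast, the Keleti--Math\'e proof avoids projections entirely and instead \emph{enlarges} the direction set at a controlled dimensional cost. Its engine is a duality between Hausdorff and packing dimension for additive complements: for any $\Omega$ one can find a compact parameter set $C$ of rotations with $\dim_P C\le (n-1)-d_\Omega$ such that the rotated copies of $\Omega$ cover all of $\mathbb{S}^{n-1}$. The union $\widetilde X$ of the correspondingly rotated copies of $X$ is then an honest Kakeya set, and the inequality $d_{\widetilde X}\le d_X+\dim_P C$ (Hausdorff dimension of such a parametrized union is controlled by the Hausdorff dimension of the fibers plus the \emph{packing} dimension of the parameter set) combined with $d_{\widetilde X}=n$ gives $d_X\ge n-\dim_P C\ge d_\Omega+1$. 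The use of packing dimension for the complement is precisely what makes the enlargement cost no more than $(n-1)-d_\Omega$. Your proposal contains no mechanism of this kind for upgrading a partial direction set to a full one, and without such a mechanism the reduction to the ordinary Kakeya conjecture cannot be completed.
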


The proof of this Theorem relies on fine notions concerning Hausdorff and packing dimension and we invite the reader to look at \cite{KELETI} for more details.

\section{Notations}

We will work in the euclidean space $\mathbb{R}^n$ with $n \geq 3$ endowed with the Lebesgue measure and the euclidean distance ; if $U$ is a measure set in $\mathbb{R}^n$ we denote by $\left|U \right|$ its $n$-dimensional Hausdorff measure and by $\left|U \right|_k$ its $k$-dimensional Hausdorff measure for $k < n$. Also we denote by $d_U$ its Hausdorff dimension and by $ \text{diam}\left(U \right)$ its diameter. We denote by $\sigma$ the spherical surface measure on $\mathbb{S}^{n-1}$ and $\mu$ will stand for a probability measure on $\mathbb{S}^{n-1}$ ; we will denote by $S_\mu$ its support. The surface measure $\sigma$ will usually not charge the support of $\mu$ \textit{i.e.} we will have $\sigma(S_\mu) = 0$. We will see that we need to focus on the study of $(\mu,p)$-norm of $$K_\delta : L^p(\mathbb{R}^n) \rightarrow L^p(\mathbb{S}^{n-1},\mu) $$ where $\mu$ is an arbitrary measure on $\mathbb{S}^{n-1}$ \textit{i.e.} we will be interested in estimating the following quantity $$\|K_\delta\|_{\mu,p} := \sup_{ \|f\|_p \leq 1} \left(  \int_{\mathbb{S}^{n-1}} (K_\delta f(e))^p d\mu \right)^\frac{1}{p}.$$ Hence the notation $\|K_\delta\|_{\mu,p}$ emphasizes the dependence on the probability measure $\mu$ set on the target space.

\section{Results}

We are going to formulate the appropriate \textit{maximal} version of the $\Omega$-Kakeya conjecture ; then we will prove that the Kakeya maximal conjecture implies the \textit{$\Omega$-Kakeya maximal conjecture}. In other words, \textit{we prove the maximal analog to Theorem \ref{CA}}. Our approach is close to the approach iniated by Mitsis in \cite{MIT} ; here we work in higher dimension. We will start by proving the following Proposition.

\begin{prp}\label{THMD}
Let $\mu$ be an arbitrary probability measure on $\mathbb{S}^{n-1}$
and suppose we have $ 1 < p < \infty$ and $\beta > 0$ such that $ n - \beta p > 0$. Suppose that we have $$ \|K_\delta\|_{\mu, p} \lesssim_{n,p,\beta} \delta^{-\beta}.$$ In this case, for any Borel set of direction $\Omega$ containing the support $S_\mu$ of $\mu$, the Hausdorff dimension of any $\Omega$-Kakeya set $X$ is at least $n - \beta p$.
\end{prp}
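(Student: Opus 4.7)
The plan is a standard contradiction argument, upgrading the $\sigma$-based implication ``maximal bound $\Rightarrow$ Hausdorff dimension bound'' to its $\mu$-based analog. Fix an $\Omega$-Kakeya set $X$, suppose for contradiction that $d_X < n - \beta p$, and choose $s$ with $d_X < s < n - \beta p$. Set $\varepsilon := n - \beta p - s > 0$. Since $\mathcal{H}^s(X) = 0$, for arbitrarily small $\eta > 0$ I can cover $X$ by balls $(B_i)_i$ with radii $r_i$ satisfying $\sum_i r_i^s < \eta$. I sort these dyadically by letting $\mathcal{B}_k$ collect the balls with $r_i \in [2^{-k}, 2^{-k+1})$, and set $U_k := \bigcup_{B \in \mathcal{B}_k} 2B$, so that $|U_k| \lesssim \#\mathcal{B}_k \cdot 2^{-kn}$.

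Next I linearize the cover scale by scale. For each direction $e \in \Omega$ the unit segment $T_e \subset X$ is covered by $\bigcup_k U_k$, so weighted pigeonhole against the probability weights $\frac{6}{\pi^2(k+1)^2}$ produces a scale $k(e) \geq 0$ for which the one-dimensional length of $T_e \cap U_{k(e)}$ is at least $c/(k(e)+1)^2$. Define $\Omega_k := \{e \in \Omega : k(e) = k\}$, and note that since $\mu$ is a probability measure with $S_\mu \subset \Omega$ we have $\sum_k \mu(\Omega_k) \geq \mu(\Omega) = 1$. Because every ball in $\mathcal{B}_k$ has radius at least $2^{-k}$, the $2^{-k}$-neighborhood of the covered portion of $T_e$ is contained in $U_k$, which gives the pointwise bound
\[
K_{2^{-k}} \mathbf{1}_{U_k}(e) \;\geq\; \frac{|T_{e,2^{-k}}(a_e) \cap U_k|}{|T_{e,2^{-k}}(a_e)|} \;\gtrsim\; \frac{1}{(k+1)^2} \qquad (e \in \Omega_k),
\]
where $a_e$ is the midpoint of $T_e$.

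Integrating the $p$-th power of this inequality against $\mu$ and using the hypothesis $\|K_{2^{-k}}\|_{\mu,p}^p \lesssim 2^{kp\beta}$ on the right-hand side,
\[
\frac{\mu(\Omega_k)}{(k+1)^{2p}} \;\lesssim\; \int_{\mathbb{S}^{n-1}} (K_{2^{-k}} \mathbf{1}_{U_k})^p \, d\mu \;\lesssim\; 2^{kp\beta}\, |U_k| \;\lesssim\; 2^{kp\beta} \cdot \#\mathcal{B}_k \cdot 2^{-kn},
\]
which rearranges to $\mu(\Omega_k) \lesssim (k+1)^{2p} \cdot 2^{-k\varepsilon} \sum_{B \in \mathcal{B}_k} r_B^s$. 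Summing in $k$ and using that the weight $(k+1)^{2p} 2^{-k\varepsilon}$ is uniformly bounded in $k$ (since $\varepsilon > 0$),
\[
1 \;\leq\; \sum_k \mu(\Omega_k) \;\lesssim_{n,p,\beta,s}\; \sum_i r_i^s \;<\; \eta,
\]
a contradiction for $\eta$ taken small enough.

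The only technical point to watch in executing this plan is the pointwise estimate on $K_{2^{-k}} \mathbf{1}_{U_k}$: one has to tune the dyadic grouping and the factor-$2$ enlargement so that the $2^{-k}$-neighborhood of a chord of length $\ell$ cut by a ball of $\mathcal{B}_k$ genuinely sits inside $2B$, lending that neighborhood a volume $\gtrsim \ell \cdot 2^{-k(n-1)}$. Everything else reduces to elementary dyadic bookkeeping and a direct application of the hypothesized $L^p(\mu)$-bound on $K_\delta$ at the scales $\delta = 2^{-k}$.
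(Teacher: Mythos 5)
Your proposal is correct and follows essentially the same route as the paper's proof: dyadic grouping of the covering balls by radius, a pigeonhole over scales to select for each direction a dominant scale $k$ (defining $\Omega_k$), doubling the balls so that the $2^{-k}$-tube around the covered part of $T_e$ lies in the fattened union, and then Chebyshev plus the hypothesized $L^p(\mu)$ bound at $\delta=2^{-k}$ together with the comparison of $(k+1)^{2p}2^{-k\varepsilon}$ against a constant. The only cosmetic difference is that you argue by contradiction from $\mathcal{H}^s(X)=0$ while the paper directly bounds $\sum_i r_i^{\alpha}$ from below for each $\alpha<n-\beta p$; these are equivalent.
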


In regards of this Proposition, we will call the following Conjecture the $\Omega$-Kakeya maximal conjecture.

\begin{conj}[$\Omega$-Kakeya maximal conjecture]
Fix any probability $\mu$ defined on $\mathbb{S}^{n-1}$ satisfying for some $d \in [0, n-1]$ $$\left[ \mu \right]_d := \sup_{e \in \mathbb{S}^{n-1},r >0} \mu\left( B_{e,r} \right)r^{-d}  \leq 1.$$ Then for any $\epsilon >0$ we have  $$ \left\|K_\delta\right\|_{\mu,n} \lesssim_{n,d,\epsilon} \delta^{\frac{(d+1)}{n}-(1+\epsilon)}.$$
\end{conj}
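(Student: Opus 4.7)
The plan is to derive the $\Omega$-Kakeya maximal conjecture as a consequence of the classical Kakeya maximal conjecture, via a Frostman-type discretization on the sphere in the spirit of Mitsis's approach cited in the text. Throughout I assume $\|K_\delta\|_{\sigma,n} \lesssim \delta^{-\epsilon}$ for every $\epsilon > 0$, i.e., the standard Kakeya maximal conjecture with respect to the uniform measure $\sigma$.

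First, I would fix a maximal $\delta$-separated set $\{e_i\}$ in $\mathbb{S}^{n-1}$ and form the associated caps $C_i = B_{e_i,\delta}$; this yields a cover of the sphere with bounded overlap, each cap satisfying $\sigma(C_i) \approx \delta^{n-1}$. The hypothesis $[\mu]_d \leq 1$ immediately gives the Frostman estimate $\mu(C_i) \leq \delta^d$.

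Second, I would exploit the almost-constancy of $K_\delta f$ on each cap in the directional variable. If $e, e' \in C_i$, then any tube $T_{e,\delta}(a)$ is contained in a dilated tube $T_{e',C_0\delta}(a)$ for some fixed $C_0 > 1$ depending only on the dimension, yielding the pointwise comparison $K_\delta f(e) \lesssim K_{C_0 \delta} f(e')$. Averaging over $e' \in C_i$ and raising to the $n$-th power gives
\[
\sup_{e \in C_i}(K_\delta f(e))^n \lesssim \frac{1}{\sigma(C_i)} \int_{C_i} (K_{C_0 \delta} f(e'))^n \, d\sigma(e') \approx \delta^{-(n-1)} \int_{C_i}(K_{C_0 \delta} f)^n \, d\sigma.
\]
Decomposing the target integral over the caps and combining the Frostman bound with this cap-by-cap estimate gives
\[
\int_{\mathbb{S}^{n-1}} (K_\delta f)^n \, d\mu \leq \sum_i \mu(C_i) \sup_{C_i}(K_\delta f)^n \lesssim \delta^{d - (n-1)} \int_{\mathbb{S}^{n-1}}(K_{C_0 \delta} f)^n \, d\sigma,
\]
where bounded overlap of the caps is used to reassemble the cap integrals. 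Applying the Kakeya maximal conjecture to $K_{C_0 \delta}$ yields the upper bound $\delta^{d - n + 1 - n\epsilon}\|f\|_n^n$, and taking $n$-th roots produces exactly $\delta^{(d+1)/n - (1+\epsilon)}\|f\|_n$, as required.

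The main technical point, and essentially the only place where care is required, is the almost-constancy step: verifying the geometric inclusion of $\delta$-tubes with axis $e$ inside slightly dilated tubes with axis $e' \in C_i$, and confirming that replacing $\delta$ by $C_0 \delta$ inside $K$ only costs a harmless multiplicative constant that is absorbed by the $\epsilon$-slack permitted by the Kakeya maximal conjecture. Beyond this, the argument is a clean interpolation between the full-sphere bound (which corresponds to $d = n-1$ and recovers the usual $\delta^{-\epsilon}$ loss) and the expected dimensional gain $\delta^{-(n-1-d)/n}$ coming from the Frostman exponent $d$ of $\mu$.
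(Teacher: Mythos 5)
Your proposal is correct and is essentially the paper's own argument: the paper first proves the transfer estimate of Theorem \ref{C3}, namely $\left\|K_\delta\right\|_{\mu,p} \lesssim_{n,d,p} \delta^{-\frac{n-(d+1)}{p}} \|K_\delta\|_{\sigma,p}$, by the same mechanism you use (a maximal $\delta$-separated set on the sphere, almost-constancy of $K_\delta f$ on $\delta$-caps, and the Frostman bound $\mu(B_{e,\delta}) \lesssim \delta^d$, reassembled into the $\sigma$-norm), and then feeds in the Kakeya maximal conjecture at $p=n$ exactly as you do. The only cosmetic difference is that your local-constancy step enlarges the tube radius to $C_0\delta$ and invokes the conjecture at scale $C_0\delta$, whereas the paper's Lemma \ref{L1} compares $K_\delta f(e_1)$ and $K_\delta f(e_2)$ at the same scale via translated dilated tubes; this affects nothing, as the loss is absorbed into constants and the $\epsilon$-slack.
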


Using Frostman's Lemma, one can easily checked that the $\Omega$-Kakeya maximal conjecture implies the $\Omega$-Kakeya conjecture. One of our main result is the following.

\begin{thm}\label{C2}
If the Kakeya maximal conjecture is true then the $\Omega$-Kakeya maximal conjecture.
\end{thm}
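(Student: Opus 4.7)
The plan is to upgrade the (hypothesized) $L^n(\sigma)$ bound for $K_\delta$ to an $L^n(\mu)$ bound for an arbitrary Frostman $\mu$, via a standard discretization-and-uncertainty argument on the sphere, in the spirit of Mitsis. Fix a finitely overlapping covering $(B_i)_{i\in I}$ of $\mathbb{S}^{n-1}$ by spherical caps of radius $\delta$, and set $M_i := \sup_{e\in B_i} K_\delta f(e)$.

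First, I would cash in the Frostman hypothesis: since $[\mu]_d \leq 1$, each cap satisfies $\mu(B_i)\lesssim \delta^d$, whence
$$\int_{\mathbb{S}^{n-1}}(K_\delta f)^n\,d\mu \;\leq\; \sum_i M_i^n\,\mu(B_i)\;\lesssim\; \delta^d \sum_i M_i^n.$$
Next, I would invoke the uncertainty principle at scale $\delta$: if $|e-e'|\leq \delta$, then any unit tube $T_{e,\delta}(a)$ sits inside a tube $T_{e',C\delta}(a)$ of the same length, giving $K_\delta f(e) \lesssim K_{C\delta} f(e')$. Hence $M_i \lesssim K_{C\delta} f(e')$ uniformly for $e'\in B_i$; since $|B_i|_\sigma \sim \delta^{n-1}$, integrating the $n$-th power over $B_i$ and summing (bounded overlap) yields
$$\sum_i M_i^n \;\lesssim\; \delta^{-(n-1)}\int_{\mathbb{S}^{n-1}}(K_{C\delta} f)^n\,d\sigma.$$

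Finally, I would apply the Kakeya maximal conjecture to $K_{C\delta}$: for every $\epsilon>0$, $\int (K_{C\delta} f)^n\,d\sigma \lesssim_\epsilon \delta^{-n\epsilon}\|f\|_n^n$. Chaining the three estimates,
$$\int (K_\delta f)^n\,d\mu \;\lesssim_\epsilon\; \delta^{d-n+1-n\epsilon}\|f\|_n^n,$$
and taking $n$-th roots yields $\|K_\delta\|_{\mu,n}\lesssim_{n,d,\epsilon}\delta^{(d+1)/n - 1 -\epsilon}$, which is exactly the $\Omega$-Kakeya maximal conjecture.

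The only nontrivial point is the uncertainty statement used in the second step, a routine geometric containment of $\delta$-tubes inside slightly dilated tubes aligned with $\delta$-close directions; the rest is Frostman bookkeeping combined with a black-box application of the standing hypothesis. The exponent arithmetic $\frac{d-n+1}{n}=\frac{d+1}{n}-1$ produces precisely the gain predicted by the conjecture, which is a consistency check that no cruder discretization scale (e.g. $\delta^{1/2}$) could have done the job and that the direction-cap scale $\delta$ is the correct one to match the tube thickness.
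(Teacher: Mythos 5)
Your proposal is correct and follows essentially the same route as the paper: discretize the sphere at scale $\delta$, use the local constancy of $K_\delta f$ on $\delta$-caps (your ``uncertainty principle'' is the paper's Lemma~\ref{L1}), bound $\mu$ of each cap by $\delta^d$ via the Frostman condition, compare the resulting discrete sum with the $\sigma$-integral at cost $\delta^{-(n-1)}$, and feed in the hypothesized $L^n(\sigma)$ bound. The only cosmetic differences are that the paper works with a maximal $\delta$-separated family rather than a finitely overlapping cap cover, and keeps the scale $\delta$ fixed via a two-sided tube containment instead of passing to $K_{C\delta}$; both variants yield the same exponent $\frac{d+1}{n}-1-\epsilon$.
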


In particular, since in the plane $\mathbb{R}^2$ we do have $\|K_\delta\|_{\sigma,2} \lesssim_{\epsilon} \delta^{-\epsilon}$ for any $\epsilon > 0$,  this gives another proof of the $\Omega$-Kakeya conjecture in the plane ; recall that this Theorem has also been established by Mitsis in \cite{MIT}.

\begin{thm}
For any Borel set $\Omega$ in $\mathbb{S}^1$, if $X$ is a $\Omega$-Kakeya set then $$ d_X \geq d_\Omega +1.$$
\end{thm}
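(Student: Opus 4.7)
The plan is to deduce this planar corollary directly from Theorem \ref{C2}, combined with the classical planar Kakeya maximal estimate, and then pass from a maximal inequality to a Hausdorff dimension bound via Frostman's lemma and Proposition \ref{THMD}.

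First, I would invoke Córdoba's planar Kakeya maximal bound $\|K_\delta\|_{\sigma,2} \lesssim_\epsilon \delta^{-\epsilon}$ for every $\epsilon>0$, which is precisely the Kakeya maximal conjecture in the case $n=2$. Applying Theorem \ref{C2} in dimension two then yields the $\Omega$-Kakeya maximal conjecture in the plane: for every $d \in [0,1]$ and every probability measure $\mu$ on $\mathbb{S}^{1}$ with $[\mu]_d \leq 1$,
$$ \|K_\delta\|_{\mu,2} \lesssim_{d,\epsilon} \delta^{\frac{d+1}{2} - 1 - \epsilon}. $$

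Next, given an $\Omega$-Kakeya set $X \subset \mathbb{R}^{2}$ and any real $d$ with $0 \leq d < d_\Omega$, I would apply Frostman's lemma to the Borel set $\Omega$ to produce a non-trivial Borel measure supported on $\Omega$ with finite $d$-dimensional Frostman constant; after normalizing and absorbing the resulting constant into the implicit constant of the estimate, one may assume that $\mu$ is a probability with $[\mu]_d \leq 1$. The maximal bound above then takes the form $\|K_\delta\|_{\mu,2} \lesssim \delta^{-\beta}$ with $\beta = 1 + \epsilon - \frac{d+1}{2}$. Since $\Omega \supset S_\mu$, Proposition \ref{THMD} applied with $n = p = 2$ yields
$$ d_X \geq n - \beta p = 2 - 2\beta = d + 1 - 2\epsilon. $$
Letting $\epsilon \to 0$ and $d \nearrow d_\Omega$ gives the desired bound $d_X \geq d_\Omega + 1$.

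The main obstacle is really the substantive content of Theorem \ref{C2}, proved earlier in the paper; the theorem at hand is essentially a packaging of that result with Córdoba's classical planar estimate. The only minor technical subtlety is the standard rescaling argument that allows a general Frostman measure to be replaced by one satisfying $[\mu]_d \leq 1$, together with the observation that Frostman's lemma does produce a measure of positive total mass for every $d < d_\Omega$.
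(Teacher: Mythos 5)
Your proposal is correct and is exactly the route the paper intends: Córdoba's planar bound $\|K_\delta\|_{\sigma,2}\lesssim_\epsilon\delta^{-\epsilon}$ verifies the Kakeya maximal conjecture for $n=2$, Theorem \ref{C2} (via Theorem \ref{C3}) transfers it to $\|K_\delta\|_{\mu,2}\lesssim_{d,\epsilon}\delta^{\frac{d+1}{2}-1-\epsilon}$ for a Frostman measure $\mu$ on $\Omega$, and Proposition \ref{THMD} with $n=p=2$ converts this into $d_X\geq d+1-2\epsilon$. The arithmetic and the limiting argument in $\epsilon$ and $d\nearrow d_\Omega$ are all in order, so this faithfully fills in the details the paper leaves implicit.
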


At this point, it is interesting to note Theorems \ref{CA} and Theorem \ref{C2} \textit{cannot provide partial result} to the $\Omega$-Kakeya conjecture. For example, say we can prove that if $X$ is a Kakeya set then we have $$d_X \geq \frac{3}{4}(n-1) + 1.$$ In this situation, \textit{we cannot use Theorems \ref{CA} and \ref{C2}} - neither their methods of proof - to show that for any $\Omega$-Kakeya set $Y$, we have $$d_Y \geq \frac{3}{4}d_\Omega  + 1.$$ Hence, in order to obtain further partial result on the $\Omega$-Kakeya conjecture, we are going to employ Bourgain's arithmetic argument in order to prove the following Theorem.

\begin{thm}\label{THM0}
For any Borel set $\Omega$ in $\mathbb{S}^{n-1}$ and any $\Omega$-Kakeya set $X$ in $\mathbb{R}^n$ we have $$d_X \geq \frac{6}{11}d_\Omega + 1.$$
\end{thm}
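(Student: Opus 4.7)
The plan is to adapt the arithmetic slicing argument that yields the $\tfrac{6n+5}{11}$ lower bound for classical Kakeya sets (Bourgain, refined by Katz--Tao), and to run it with a Frostman exponent $s<d_\Omega$ playing the role of the sphere dimension $n-1$. Indeed $\tfrac{6}{11}(n-1)+1=\tfrac{6n+5}{11}$, and the parameter $n-1$ enters that argument only through the cardinality $\delta^{-(n-1)}$ of a maximal $\delta$-net of $\mathbb{S}^{n-1}$; in our setting this cardinality will be replaced by $\delta^{-s}$ via Frostman's lemma, and nothing else in the classical proof is sensitive to the direction set being the full sphere.

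Fix $s<d_\Omega$ and apply Frostman's lemma to produce a probability measure $\nu$ on $\Omega$ with $\nu(B(e,r))\lesssim r^{s}$. At each dyadic scale $\delta>0$ extract a maximal $\delta$-separated family $\{e_i\}_{i=1}^{N}\subset\Omega$ with $N\gtrsim\delta^{-s}$, and pick for each $e_i$ a unit tube $T_i\subset X$ of direction $e_i$; the $\delta$-thickenings $T_i^\delta$ lie in $X^\delta$, the $\delta$-neighbourhood of $X$. A harmless pigeonhole on a finite partition of $\mathbb{S}^{n-1}$ into small caps lets me assume that all $e_i$ lie near the $x_n$-axis, so each $T_i^\delta$ is the $\delta$-thickening of a graph of slope $v_i\in\mathbb{R}^{n-1}$.

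Next I set up the three-slice configuration. Choose heights $t_1<t_2<t_3$ with $2t_2=t_1+t_3$ and let $A_j$ be a minimal cover of $X^\delta\cap\{x_n=t_j\}$ by $\delta$-balls, of cardinality $M_j$. Averaging over $t_j$ gives $M_j\cdot\delta^{n-1}\lesssim |X^\delta|$, so set $M:=\max_j M_j\lesssim|X^\delta|\,\delta^{-(n-1)}$. Every tube $T_i^\delta$ passes through $\delta$-balls of centres $a_j^i\in A_j$ obeying
\[
 a_1^i+a_3^i=2a_2^i+O(\delta),\qquad v_i=\frac{a_3^i-a_1^i}{t_3-t_1}+O(\delta),
\]
and since the directions $v_i$ are $\delta$-separated this produces $\gtrsim\delta^{-s}$ essentially distinct ``midpoint triples'' $(a_1^i,a_2^i,a_3^i)\in A_1\times A_2\times A_3$.

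The heart of the argument is now the Bourgain--Katz--Tao sum-difference estimate: three sets $A_1,A_2,A_3\subset\mathbb{R}^{n-1}$ of size at most $M$ can support at most $M^{11/6+o(1)}$ essentially distinct midpoint triples. Combining $\delta^{-s}\lesssim M^{11/6+o(1)}$ with $M\lesssim|X^\delta|\,\delta^{-(n-1)}$ yields $|X^\delta|\gtrsim\delta^{(n-1)-6s/11-\varepsilon}$, i.e.\ $d_X\ge 1+\tfrac{6}{11}s$, and the proof closes on letting $s\uparrow d_\Omega$. The main obstacle is precisely this arithmetic input: obtaining the exponent $11/6$ demands the iterated Plünnecke--Ruzsa chain underlying Bourgain's argument and the Katz--Tao refinement, and one must verify that this chain survives $\delta$-discretisation with only $\delta^{-\varepsilon}$ loss. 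A secondary technicality is guaranteeing that the midpoint triples are genuinely distinct at scale $\delta$ (and not collapsing after rounding to the cover $A_j$), which typically requires a two-ends or refinement step before invoking the additive combinatorial inequality.
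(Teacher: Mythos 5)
Your strategy is the right one and is essentially the paper's: run Bourgain's arithmetic argument with a Frostman measure of exponent $s<d_\Omega$ replacing the surface measure, get a lower bound $\gtrsim\delta^{-s}$ on the number of distinct differences $a-b$ from the $\delta$-separation of directions, and an upper bound $\lesssim M^{11/6}$ from the Katz--Tao sum-difference theorem. The arithmetic core of your proposal matches the paper's Steps 2--3 of the second part.

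However, there is a genuine gap: as written, your argument only bounds the (lower) Minkowski dimension of $X$, not its Hausdorff dimension, which is what the theorem asserts. You work with the single neighbourhood $X^\delta$ and three \emph{fixed} slices $\{x_n=t_j\}$ with $2t_2=t_1+t_3$; every tube meets all three slices, so the midpoint triples come for free, and you conclude $|X^\delta|\gtrsim\delta^{(n-1)-6s/11-\varepsilon}$, which is a box-dimension statement. To control Hausdorff dimension you must start from an efficient covering $\{B_i\}$ with $\sum_i\mathrm{diam}(B_i)^s<1$ and pigeonhole to a single scale $\delta_k$ (the paper uses the super-exponential scales $\delta_k=2^{-2^{\eta k}}$ and the sets $\Omega_k$, $\Omega_{k,j}$). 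After this reduction each segment $T_e$ is only guaranteed to meet the scale-$\delta_k$ part $Y_k$ of the covering in a set of length $\gtrsim 1/k^2$, so there is no way to prescribe in advance three slices in arithmetic progression that $T_e\cap Y_k$ actually hits. This is exactly why the paper slices at \emph{two} scales ($\delta$ and $\delta^\eta$) and invokes Heath-Brown's theorem on $3$-term arithmetic progressions in dense subsets of $\{0,\dots,M\}$ to locate, for each $e$, slabs $A_{j,m},A_{j,m+m'},A_{j,m+2m'}$ all meeting $Y_k\cap T_e$; the points $a_e,b_e$ and their midpoint are then extracted from these. Your proposal omits this mechanism entirely, and the fixed-slice version cannot be repaired into a Hausdorff bound without it. (Your two flagged ``technicalities'' --- the discretised Pl\"unnecke--Ruzsa chain and the distinctness of triples after rounding --- are real but minor by comparison; the first is simply the Sum-difference Theorem quoted in the paper, stated already for subsets of $\delta\mathbb{Z}^n$.)
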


In \cite{LAURA}, Venieri proved that if $\Omega$ is $d$-Alfhors regular then a $\Omega$-Kakeya set $X$ has Hausdorff dimension greater than $\frac{d+2}{2} + \frac{1}{2}$. Theorem \ref{THM0} strengthen this result since it gives better estimate for large $n$ and also since we do not make assumption concerning the set of direction $\Omega$.

\section{Proof of Proposition \ref{THMD}}

We let $\mu$ be an arbitrary probability measure on $\mathbb{S}^{n-1}$ and suppose we have $ 1 < p < \infty$ and $\beta > 0$ such that $ n - \beta p > 0$. We also suppose that we have $$ \|K_\delta\|_{\mu, p} \lesssim_{n,p,\beta} \delta^{-\beta}.$$ We fix then an arbitrary Borel set of directions $\Omega$ which contains $S_\mu$ and we let $X$ included in $\mathbb{R}^n$ be a $\Omega$-Kakeya set ; we are going to prove that we have $$ d_X \geq n-\beta p.$$ Fix an arbitrary $ \alpha \in (0,n-\beta p)$. Consider a covering of $X$ by balls $B_i = B(x_i,r_i)$ such that $r_i < 1$ for any $i \in \mathcal{I}$. We will show that we have $$ \sum_{i \in \mathcal{I}} r_i^\alpha \gtrsim_\alpha 1$$ which gives $d_X \geq \alpha$. For $e \in \Omega$, let $T_e \subset X$ be a unit segment oriented along the direction $e$ ; for $k \geq 1$ we order the balls $B_i$ by their radii defining $$\mathcal{I}_k = \left\{ i \in \mathcal{I} : r_i \simeq \frac{1}{2^k} \right\}.$$ We also define $$ \Omega_k = \left\{ e \in \Omega : \left| T_e \cap \bigcup_{i \in \mathcal{I}_k} B_i \right|_1 \geq \frac{1}{2k^2} \right\}.$$ It is not difficult to show that we have $\Omega = \bigcup_{k \geq 1} \Omega_k$. We are going to fatten a little bit every segment $T_e$ in order to deal with tubes. We define for $k \geq 1$ the set $$Y'_k = \bigcup_{i \in \mathcal{I}_k} B(x_i, 2r_i).$$ For $e \in \Omega$ , by simple geometry we have the following inequality $$ \left| T_{e,2^{-k}} \cap Y'_k \right| \gtrsim \frac{1}{k^2}\left| T_{e,2^{-k}}  \right|.$$ Hence for any $e \in \Omega_k$ we have $K_{2^{-k}} \mathbb{1}_{Y'_k} (e) \gtrsim \frac{1}{k^2}$. Using our hypothesis on $K_\delta$, we obtain $$\mu\left(\Omega_k \right) \lesssim \mu \left( \left\{ K_{2^{-k}} \mathbb{1}_{Y'_k} \geq \frac{1}{2k^2} \right\} \right) \lesssim_{n,p,\beta}  k^{2p}  2^{k\beta p}    \left|Y'_k \right|.$$ Since we have $\left| Y'_k \right| \lesssim_n 2^{-kn}\#\mathcal{I}_k $ it follows that $\mu\left(\Omega_k \right) \leq k^{2p}  2^{-k(n-\beta p)} \#\mathcal{I}_k$. We have selected $\alpha < n-\beta p$ and so we have by polynomial comparison $k^{2p}2^{-k(n-\beta p)} \lesssim_\alpha 2^{-k\alpha}$. Hence we have $$\sum_{i \in \mathcal{I}} r_i^\alpha \geq \sum_k 2^{-\alpha k}\#\mathcal{I}_k \gtrsim_\alpha \sum_k \mu(\Omega_k) \geq \mu(\Omega) = 1$$ since $\Omega$ contains the support $S_\mu$ of $\mu$.

\section{Proof of Theorem \ref{C2}}

We are going to prove Theorem \ref{C2} proving the following estimate.

\begin{thm}\label{C3}
Fix $1 < p < \infty$ and let $\mu$ be a probability on $\mathbb{S}^{n-1}$satisfying $\left[ \mu \right]_d \leq 1$ for some $0 \leq d \leq n-1$. In this case we have for any $\delta >0$ $$ \left\|K_\delta\right\|_{\mu,p} \lesssim_{n,d,p} \delta^{-\frac{n-(d+1)}{p}} \|K_\delta\|_{\sigma,p}.$$
\end{thm}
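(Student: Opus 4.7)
The plan is to discretise the sphere at scale $\delta$, convert the $\mu$-integral into a $\sigma$-integral, and absorb the discrepancy between $\mu$ and $\sigma$ into the factor $\delta^{d-(n-1)}$ predicted by the Frostman-type condition $[\mu]_d\le 1$. Fix a maximal $\delta$-separated family $\{e_j\}_{j\in J}$ in $\mathbb{S}^{n-1}$, so that the balls $B_{e_j,\delta}$ cover the sphere with bounded overlap while the half-sized balls $B_{e_j,\delta/2}$ are pairwise disjoint. Throughout, I will use the elementary geometric fact that if $|e-e'|\le c\delta$ then any tube $T_{e,\delta}(a)$ is contained in $T_{e',C\delta}(a)$ (same center $a$), which yields the pointwise comparison $K_\delta f(e)\lesssim_n K_{C\delta}f(e')$. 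A second standard ingredient I will invoke at the end is that $K_{C\delta}f\lesssim_{C,n}K_\delta f$ pointwise, obtained by decomposing a thick tube into $O(C^{n-1})$ parallel thin translates.

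First I would split
$$\int_{\mathbb{S}^{n-1}}K_\delta f(e)^p\,d\mu(e)\le\sum_j\int_{B_{e_j,\delta}}K_\delta f(e)^p\,d\mu(e),$$
apply the tube-comparison to get $K_\delta f(e)\lesssim K_{2\delta}f(e_j)$ on $B_{e_j,\delta}$, and bound $\mu(B_{e_j,\delta})\le\delta^d$ by the Frostman hypothesis; this produces
$$\int K_\delta f^p\,d\mu\lesssim\delta^d\sum_{j\in J}K_{2\delta}f(e_j)^p.$$
Next, again by tube comparison, $K_{2\delta}f(e_j)\lesssim K_{4\delta}f(e)$ for every $e\in B_{e_j,\delta/2}$; since $\sigma(B_{e_j,\delta/2})\gtrsim\delta^{n-1}$ and the half-balls are disjoint, averaging and summing gives
$$\delta^d\sum_j K_{2\delta}f(e_j)^p\lesssim\delta^{d-(n-1)}\int_{\mathbb{S}^{n-1}}K_{4\delta}f(e)^p\,d\sigma(e).$$
Finally I replace $K_{4\delta}$ by $K_\delta$ using the second comparison, take $p$-th roots, and read off
$$\|K_\delta\|_{\mu,p}\lesssim_{n,d,p}\delta^{(d-(n-1))/p}\|K_\delta\|_{\sigma,p}=\delta^{-(n-(d+1))/p}\|K_\delta\|_{\sigma,p},$$
which is exactly the desired inequality.

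The only genuinely delicate point is the bookkeeping of the tube-inflation constants: each use of the geometric lemma dilates $\delta\mapsto C\delta$, and one has to make sure that none of these dilations leaks a power of $\delta$ into the final estimate or leaves the right-hand side expressed in terms of $\|K_{C\delta}\|_{\sigma,p}$ rather than $\|K_\delta\|_{\sigma,p}$. This is handled cleanly by the pointwise comparison $K_{C\delta}f\lesssim_{C,n}K_\delta f$ applied once at the end, so that all constants depending on $C$ (and hence on $n,d,p$) are absorbed into the implicit $\lesssim_{n,d,p}$. Beyond this constant-chasing the argument is purely soft; the Frostman hypothesis enters exactly once, through $\mu(B_{e_j,\delta})\le\delta^d$, which is precisely what is needed to convert the density loss from $\sigma$ to $\mu$ into the factor $\delta^{d-(n-1)}$.
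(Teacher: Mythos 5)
Your proposal is correct and follows essentially the same route as the paper: discretize $\mathbb{S}^{n-1}$ by a maximal $\delta$-separated family, use $[\mu]_d\le 1$ to bound $\mu(B_{e_j,\delta})\le\delta^d$, exploit the local constancy of $K_\delta f$ at scale $\delta$ (the paper's Lemma \ref{L1}), and compare the resulting discrete sum with $\delta^{-(n-1)}\|K_\delta f\|_{\sigma,p}^p$. The only cosmetic difference is that you inflate the tube width to $2\delta$ and $4\delta$ and deflate at the end, whereas the paper keeps the scale fixed throughout via Lemma \ref{L1}; both handle the constants correctly.
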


This estimate comes from the fact that \textit{a function $K_\delta f$ is almost $\delta$-discrete}. Observe that this estimate is not possible in general since the surface measure $\sigma$ typically does not charge the support $S_\mu$ of the measure $\mu$ \textit{i.e.} $\sigma(S_\mu) = 0$. The following Lemma is a manifestation of the idea that we \textit{should not} define the \textit{orientation} of an object more precisely than its \textit{eccentricity}.

\begin{lemma}\label{L1}
For any $\delta > 0$ and any directions $e_1,e_2 \in \mathbb{S}^{n-1}$ satisfying $|e_1-e_2| \leq \delta$ we have $$K_\delta f(e_1) \simeq_{n} K_\delta f(e_2) $$ for any locally integrable function $f$. 
\end{lemma}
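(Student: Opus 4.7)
The plan is to reduce the comparison of the two maximal averages to an elementary geometric fact: when the angle between $e_1$ and $e_2$ is at most $\delta$, a tube of radius $\delta$ in direction $e_1$ is essentially a tube of radius $\simeq \delta$ in direction $e_2$. Concretely, I would first show that for any center $a$, the tube $T_{e_1,\delta}(a)$ is contained in $T_{e_2, C\delta}(a)$ for some dimensional constant $C$, up to an extension of the axis by an $O(\delta^2)$ amount near the two endpoints. Once this is secured, the estimate $K_\delta f(e_1) \lesssim_n K_\delta f(e_2)$ follows by a routine covering argument, and the comparison in the other direction is symmetric.

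For the geometric step, I would write any $x \in T_{e_1,\delta}(a)$ as $x = a + t e_1 + v$ with $|t|\le 1/2$ and $v \perp e_1$, $|v|\le \delta$, and then project onto $e_2$: since $v\perp e_1$, one has $\langle v, e_2\rangle = \langle v, e_2-e_1\rangle$, so $|\langle v, e_2\rangle|\le \delta^2$, while $\langle x-a, e_2\rangle = t\langle e_1,e_2\rangle + \langle v,e_2\rangle$ stays within $\pm(1/2+\delta^2)$. The component of $x-a$ orthogonal to $e_2$ has norm at most $|x-a|\le 1/2 + \delta$, but when one subtracts the longitudinal part, the residual is of order $\delta$. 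Hence $T_{e_1,\delta}(a)$ can be covered by $O(1)$ (in fact by two or three) unit tubes $T_{e_2, C\delta}(a^*_j)$, with $a^*_j$ shifted by $O(\delta^2)$ along $e_2$ to absorb the mild overshoot of the endpoints.

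From here the averaging estimate is immediate: for each $j$,
\[
\int_{T_{e_1,\delta}(a)} |f| \;\le\; \sum_j \int_{T_{e_2,C\delta}(a^*_j)} |f|.
\]
Each $T_{e_2, C\delta}(a^*_j)$ can in turn be partitioned into $O(C^{n-1})$ parallel $\delta$-tubes in direction $e_2$, each of which contributes an average bounded by $K_\delta f(e_2)$. Summing and dividing by $|T_{e_1,\delta}(a)| \simeq_n |T_{e_2,\delta}|$ yields $K_\delta f(e_1) \lesssim_n K_\delta f(e_2)$; taking the supremum over $a$ and swapping $e_1 \leftrightarrow e_2$ gives the lemma.

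The main obstacle I anticipate is purely bookkeeping: the $\delta^2$ overshoot at the endpoints in the $e_2$-direction means $T_{e_1,\delta}(a)$ is not literally contained in a single unit tube $T_{e_2,C\delta}(a)$, so one must either allow a covering by finitely many such unit tubes or invoke the trivial comparability between tubes of length $1$ and length $1+O(\delta^2)$. Neither route introduces any real difficulty, but some care is needed to keep the constant dimensional and independent of $\delta$.
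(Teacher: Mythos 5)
Your argument is correct and takes essentially the same route as the paper: the paper's (one-line) proof rests on the same geometric fact, namely that after a translation and a bounded dilation a $\delta$-tube in direction $e_1$ contains and is contained in a $\delta$-tube in direction $e_2$ when $|e_1-e_2|\leq\delta$, and your covering of $T_{e_1,\delta}(a)$ by $O(1)$ tubes $T_{e_2,C\delta}$ followed by subdivision into $\delta$-tubes is just a spelled-out version of that comparison. The $O(\delta^2)$ endpoint overshoot you flag is harmless for exactly the reasons you give.
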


\begin{proof}
This comes from the fact that there is a dimensional constant $a_n > 1$ such that if we have two tubes $T_{e_1,\delta},T_{e_2,\delta}$ with $|e_1-e_2| < \delta $ then one can find $\Vec{t} \in \mathbb{R}^n$ such that $$ \Vec{t} + \frac{1}{a_n}T_{e_1,\delta} \subset T_{e_2,\delta} \subset \Vec{t} + a_nT_{e_1,\delta}.$$
\end{proof}

We can then relate the $(\sigma,p)$-norm of $K_\delta f$ with a discrete sum over a family $\boldsymbol{e}_{\delta,\mathbb{S}^{n-1}} \subset \mathbb{S}^{n-1}$ which is $\delta$-separated and maximal for this property.

\begin{lemma}
For $f$ locally integrable and any family $\boldsymbol{e}_{\delta,\mathbb{S}^{n-1}}$ of $\mathbb{S}^{n-1}$ which is maximal and $\delta$-separated, we have $$\|K_\delta f \|_{\sigma,p}^p \simeq_n \sum_{e \in \boldsymbol{e}_{\delta,\mathbb{S}^{n-1}}} K_\delta f(e)^p \delta^{n-1}.$$
\end{lemma}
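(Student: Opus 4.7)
The plan is to discretize the integral $\|K_\delta f\|_{\sigma,p}^p = \int_{\mathbb{S}^{n-1}} K_\delta f(e)^p \, d\sigma(e)$ by associating to each point of $\boldsymbol{e}_{\delta,\mathbb{S}^{n-1}}$ a spherical cell of surface measure comparable to $\delta^{n-1}$, and then invoking Lemma \ref{L1} to replace $K_\delta f$ by a constant on each cell.

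First, I would use the hypotheses on the family. By maximality of $\boldsymbol{e}_{\delta,\mathbb{S}^{n-1}}$, the closed caps $B(e,\delta) \cap \mathbb{S}^{n-1}$ cover $\mathbb{S}^{n-1}$; by $\delta$-separation, the caps $B(e,\delta/2) \cap \mathbb{S}^{n-1}$ are pairwise disjoint. Taking a Voronoi partition $\{V_e\}_{e \in \boldsymbol{e}_{\delta,\mathbb{S}^{n-1}}}$ of the sphere associated to this family, one obtains
$$B(e,\delta/2) \cap \mathbb{S}^{n-1} \subset V_e \subset B(e,\delta) \cap \mathbb{S}^{n-1},$$
so that the spherical surface measure satisfies $\sigma(V_e) \simeq_n \delta^{n-1}$.

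Next, for any $e' \in V_e$ one has $|e - e'| \leq \delta$, so Lemma \ref{L1} applies and gives $K_\delta f(e') \simeq_n K_\delta f(e)$ pointwise on $V_e$. Consequently
$$\int_{V_e} K_\delta f(e')^p \, d\sigma(e') \simeq_{n,p} K_\delta f(e)^p \, \sigma(V_e) \simeq_n K_\delta f(e)^p \, \delta^{n-1},$$
and summing over $e \in \boldsymbol{e}_{\delta,\mathbb{S}^{n-1}}$ yields the desired equivalence.

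There is no real obstacle here; the statement is essentially a routine discretization, and the only non-trivial ingredient, the pointwise stability of $K_\delta f$ under $\delta$-perturbations of the direction, has already been isolated in Lemma \ref{L1}. The only points requiring minor care are the existence of a partition with cells trapped between two concentric spherical caps of comparable size (standard Voronoi construction) and keeping track of the fact that the implicit constants depend only on the dimension $n$ (and on $p$ through the power, but the constants hidden by $\simeq$ can be taken independent of $p$ after raising the pointwise equivalence to the $p$-th power).
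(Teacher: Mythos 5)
Your proof is correct and takes essentially the same approach as the paper: the paper proves the two inequalities separately, using that the caps $B(e,\delta)$ cover the sphere (maximality) for the upper bound and that the caps $B(e,\delta/2)$ are disjoint ($\delta$-separation) for the lower bound, together with Lemma \ref{L1} and $\sigma(B(e,\delta))\simeq_n\delta^{n-1}$; your Voronoi partition merely packages these two facts into a single decomposition. The only quibble is your parenthetical claim that the constants are independent of $p$ --- raising the pointwise comparison of Lemma \ref{L1} to the $p$-th power gives a constant of the form $C_n^p$ --- but the paper's own statement and proof share this imprecision.
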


\begin{proof}
On one hand we have $$\int_{ \mathbb{S}^{n-1}} K_\delta f(e)^p d\sigma(e) \lesssim_{n} \sum_{e \in \boldsymbol{e}_{\delta,\mathbb{S}^{n-1}}} K_\delta f(e)^p \sigma(B(e,\delta)) \simeq_{n} \sum_{e \in \boldsymbol{e}_{\delta,\mathbb{S}^{n-1}}} K_\delta f(e)^p \delta^{n-1}.$$ On the other hand we have$$\int_{ \mathbb{S}^{n-1}} K_\delta f(e)^p d\sigma(e) \gtrsim_{n} \sum_{e \in \boldsymbol{e}_{\delta,\mathbb{S}^{n-1}}} K_\delta f(e)^p \sigma(B(e,\frac{\delta}{2})) \simeq_{n} \sum_{e \in \boldsymbol{e}_{\delta,\mathbb{S}^{n-1}}} K_\delta f(e)^p \delta^{n-1}$$ which concludes.
\end{proof}

We can now prove Theorem \ref{C3}.

\begin{proof}
Fix $\delta >0$ and consider a family $\left (e_k\right)_{k \leq m} \subset S_\mu$ which is $\delta$-separated and whose cardinal is maximal ; this implies that we have $$S_\mu \subset \bigcup_{ k \leq m } B_{e_k,2\delta}.$$ For $f$ in $L^p(\mathbb{R}^n)$ we have then $$\int_{S_\mu} K_\delta f(e)^p d\mu(e) \lesssim_{n} \sum_{k \leq m} \int_{ B_{e_k,2\delta}} K_\delta f(e_k)^p \mu( B_{e_k,2\delta}) \lesssim_{n,d} \sum_{ k \leq m } K_\delta f(e_k) \delta^d$$ using Lemma \ref{L1} and the fact that $\left[ \mu \right]_d \leq 1$. Now we complete the family $\left (e_k\right)_{k \leq m}$ into a $\delta$-separated family $\boldsymbol{e}_{\delta,\mathbb{S}^{n-1}}$ which is maximal in $\mathbb{S}^{n-1}$. We have then $$ \sum_{ k \leq m } K_\delta f(e_k)^p \delta^d \leq \sum_{ e \in \boldsymbol{e}_{\delta,\mathbb{S}^{n-1}}} K_\delta f (e)^p \delta^d \simeq \delta^{d+1-n} \sum_{ e \in \boldsymbol{e}_{\delta,\mathbb{S}^{n-1}}} K_\delta f (e)^p \delta^{n-1} \simeq_{n} \delta^{d+1-n} \|K_\delta f \|_{\sigma,p}^p$$ using the previous lemma.
\end{proof}

We can now prove Theorem \ref{C2} \textit{i.e.} we can prove that the Kakeya maximal conjecture implies to the $\Omega$-Kakeya maximal conjecture. This simply comes from the fact that the $\epsilon$-loss can be easily transferred thanks to Theorem \ref{C3}.

\begin{proof}
Fix any probability $\mu$ defined on $\mathbb{S}^{n-1}$ satisfying for some $d \in [0, n-1]$, $\left[ \mu \right]_d \leq 1$. Thanks to Theorem \ref{C3}, if the Kakeya maximal conjecture is true then we have $$\|K_\delta\|_{\mu,n} \lesssim_{n,d}\delta^{\frac{(d+1)}{n}-1}\|K_\delta\|_{ \sigma, n} \lesssim_{n,d,\epsilon} \delta^{\frac{(d+1)}{n}-(1+\epsilon)}$$ \textit{i.e.} the $\Omega$-Kakeya maximal conjecture is true.
\end{proof}

\section{Proof of Theorem \ref{THM0}}

The proof of Theorem \ref{THM0} follows Bourgain's arithmetic argument for the classic Kakeya problem ; this method relies on the following two results. The first one allow us to give an upper bound on the \textit{difference set} $A-B$.

\begin{thm}[Sum-difference Theorem]
Fix any $\delta > 0$ and suppose that $A,B$ are finite subset of $\delta \mathbb{Z}^n$ such that $\#A,\#B \leq N$. If $G \subset A\times B$ satisfies $$\#\{a+b : (a,b)\in G \} \lesssim N $$ then we have $\# \{a-b : (a,b)\in G \} \leq N^{2-\frac{1}{6}}$.
\end{thm}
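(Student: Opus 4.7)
The plan is to deploy the standard additive-combinatorics toolkit: the Balog--Szemerédi--Gowers theorem combined with the Plünnecke--Ruzsa inequalities. The hypothesis that $\#\{a+b : (a,b)\in G\} \lesssim N$ says exactly that the \emph{partial} sumset of $A$ with $B$ along the graph $G$ is small, which is precisely the situation BSG is designed to exploit. One then converts this partial information into bounds on \emph{full} sumsets of large subsets, pushes through Plünnecke--Ruzsa to control difference sets, and finally returns to a bound on the partial difference set.

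First I would dispose of the trivial range by cases on $\#G$. If $\#G \leq N^{2-1/6}$, then $\#\{a-b : (a,b)\in G\} \leq \#G \leq N^{11/6}$ and we are done. So assume $\#G \gtrsim N^{11/6}$, which in particular means $G$ is a dense subgraph of $A \times B$.

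Next, invoke the Balog--Szemerédi--Gowers theorem applied to the bipartite graph $G$: under the density assumption and the partial-sumset bound, there exist subsets $A' \subset A$ and $B' \subset B$, of size $\gtrsim \#G / N$, such that the \emph{full} sumset satisfies $\#(A' + B') \lesssim N \cdot (N^2/\#G)^{O(1)}$. Having passed to a full-sumset regime, apply the Plünnecke--Ruzsa inequality (for instance in the asymmetric form $\#(A' - B') \leq \#(A' + B')^3 / (\#A' \cdot \#B')$, or an iterated version) to obtain a bound on $\#(A' - B')$. Finally, use the Ruzsa covering lemma to cover $A$ and $B$ by a controlled number of translates of $A'$ and $B'$, and deduce a bound on the partial difference set $\#\{a-b : (a,b) \in G\}$ from the full difference set $\#(A' - B')$.

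The main obstacle will be the parameter bookkeeping: one must track the exponents through the chain BSG $\to$ Plünnecke--Ruzsa $\to$ covering and optimize so that the final bound is exactly $N^{2-1/6}$. The specific exponent $1/6$ arises as the balance point between the density threshold $\#G \gtrsim N^{2-1/6}$ at which BSG kicks in nontrivially, and the polynomial losses $(N^2/\#G)^{O(1)}$ accumulated in the sumset/difference-set comparison. The choice $1/6$ is dictated by the cubic appearance of $\#(A' + B')$ in the Plünnecke--Ruzsa step, which is the only non-elementary numerical ingredient in the optimization.
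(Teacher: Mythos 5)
The paper does not actually prove this statement: it is quoted as a black box. It is the sum--difference theorem of Katz and Tao (\emph{Bounds on arithmetic projections, and applications to the Kakeya conjecture}, Math.\ Res.\ Lett.\ 6 (1999)), which improved Bourgain's exponent $2-\frac{1}{13}$ to $2-\frac{1}{6}$. So there is no internal proof to compare against, and your proposal has to be judged on its own merits.

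Judged that way, it has a genuine gap, and the gap sits exactly where you defer to ``parameter bookkeeping.'' The route you describe --- Balog--Szemer\'edi--Gowers to convert the partial sumset along $G$ into a full sumset $A'+B'$, then Pl\"unnecke--Ruzsa, then a covering step --- is essentially Bourgain's original argument, and when the exponents are tracked honestly it yields $N^{2-\frac{1}{13}}$, not $N^{2-\frac{1}{6}}$. The losses that prevent you from reaching $\frac{1}{6}$ are not the ``cubic appearance'' of $\#(A'+B')$ in Pl\"unnecke--Ruzsa, as you suggest, but (i) the polynomial losses $K^{O(1)}$ in BSG itself, with $K=N^{1/6}$ in your density regime and with $O(1)$ exponents that are nowhere near $1$ in any known version of BSG (already $\#(A'+B')\lesssim K^{4}N$ fed into $\#(A'-B')\leq \#(A'+B')^{3}/(\#A'\,\#B')$ gives a bound worse than the trivial $N^{2}$); and (ii) the unaddressed return from the full difference set $A'-B'$ to the partial difference set $\{a-b:(a,b)\in G\}$: BSG only controls the pairs of $G$ lying in $A'\times B'$, the remaining pairs of $G$ are not controlled by $\#(A'-B')$, and the Ruzsa covering lemma you invoke requires a bound on $\#(A+A')$ or $\#(A-A')$ that the hypotheses do not supply. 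The exponent $\frac{1}{6}$ is precisely what Katz and Tao gained by \emph{abandoning} the BSG route in favour of a direct elementary argument on the graph $G$ (iterated pigeonhole refinements and identities expressing a difference $a-b$ through a bounded number of elements of $A$, $B$ and the sumset). To prove the statement as quoted you should reproduce that argument; your plan, even if every step were completed, lands at a strictly weaker exponent.
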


The second Theorem needed is due to Heath-Brown \cite{HEATH} : it states that if $S$ is a large subset of $\{0,\dots,M\}$ for $M$ large enough then $S$ contains an arithmetic progression of length $3$.

\begin{thm}[Heath-Brown]
There exists an integer $M_0$ such that if $M > M_0$ is a integer and if $S$ is a subset of $\{0,\dots,M \}$ such that $$\#S \geq \frac{M}{\log(M)^c} $$ then $S$ contains a subset of the form $\{m,m+m', m+2m' \} \subset S$. Here $c>0$ is an absolute constant.
\end{thm}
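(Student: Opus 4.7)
The plan is to prove the theorem by contradiction, following the Fourier-analytic density-increment strategy originated by Roth and sharpened by Heath-Brown (and, independently, Szemerédi) to replace the weak bound $1/\log\log M$ by the polylogarithmic one. Write $\alpha := \#S / M$ and suppose $S \subset \{0,\dots,M\}$ contains no nontrivial 3-term arithmetic progression; the goal is to deduce $\alpha \lesssim (\log M)^{-c}$ for some absolute $c>0$. Embedding the problem in $\mathbb{Z}/N\mathbb{Z}$ for a prime $N \simeq 3M$ (to avoid wrap-around) and setting $f := \mathbb{1}_S$, the standard Plancherel expansion writes
$$N \cdot T(S) \;=\; \sum_{\xi \in \mathbb{Z}/N\mathbb{Z}} \hat{f}(\xi)^{2}\, \hat{f}(-2\xi),$$
where $T(S)$ counts the 3-APs in $S$. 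The $\xi = 0$ term contributes the expected value $\alpha^{3} N^{3}$, while the AP-free hypothesis forces $N\cdot T(S) = \alpha N^{2}$. Hence the oscillatory frequencies must cancel the main term, and a standard $L^\infty$--$L^2$ split combined with Parseval yields a nonzero frequency $\xi_0$ with $|\hat{f}(\xi_0)| \gtrsim \alpha^{2} N$.

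Roth's classical argument uses the single peak $\xi_0$ together with Dirichlet approximation to $\xi_0/N$ to locate an arithmetic progression of length $\sim M^{1/2}$ on which the relative density of $S$ is boosted from $\alpha$ to $\alpha(1+c)$; iterating, the length is essentially square-rooted at every step, giving only $\alpha \lesssim 1/\log\log M$. Heath-Brown's improvement is to work not with a single large Fourier coefficient but with the entire \emph{large spectrum}
$$R_\eta \;:=\; \bigl\{\xi \in \mathbb{Z}/N\mathbb{Z}\setminus\{0\} : |\hat{f}(\xi)| \geq \eta N\bigr\}$$
for a threshold $\eta$ polynomial in $\alpha$; Parseval controls $\#R_\eta$ by a polynomial in $1/\alpha$. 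A Bogolyubov-type argument then locates a long arithmetic progression inside the Bohr set $B(R_\eta, 1/4) := \{x : \|x\xi/N\| \leq 1/4 \text{ for all } \xi \in R_\eta\}$, and the simultaneous near-constancy of every character in $R_\eta$ along this AP upgrades the density increment to $\alpha \mapsto \alpha + c\alpha^{A}$ on an AP whose length is $M$ divided only by a \emph{polynomial} in $1/\alpha$, not a square root.

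Iterating this Heath-Brown increment, density $>1$ is reached after a polynomial-in-$(1/\alpha)$ number of stages; because each stage costs only a polynomial-in-$(1/\alpha)$ factor in length, the cumulative length loss is at most $\exp(\alpha^{-B})$ for an absolute exponent $B$. For the iteration to terminate while the ambient progression stays nontrivial, one needs $M \gtrsim \exp(\alpha^{-B})$, which rearranges to exactly $\alpha \lesssim (\log M)^{-c}$ with $c = 1/B$, the claimed bound. The main technical obstacle --- and the crux of Heath-Brown's contribution over Roth --- is the enhanced density-increment lemma itself: extracting a long genuine arithmetic progression from the Bohr set $B(R_\eta,1/4)$, transporting it back to the original interval $\{0,\dots,M\}$, and showing that the density of $\mathbb{1}_S$ really increases by the claimed $\alpha^A$ factor requires a careful Freĭman--Bogolyubov argument together with a pigeonholing that uses \emph{all} the frequencies in $R_\eta$ simultaneously rather than one at a time. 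Once this lemma is in place, the rest of the argument is a mechanical iteration of the Roth framework.
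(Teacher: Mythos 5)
You should first note that the paper does not prove this statement at all: it is Heath-Brown's theorem, quoted from the reference [HEATH] and used as a black box in the proof of Theorem~\ref{THM0}. So your attempt has to be measured against the literature rather than against an in-paper argument. Your general frame (Roth's Fourier-analytic density increment, upgraded by working with the whole large spectrum $R_\eta$, whose size Parseval bounds by $\mathrm{poly}(1/\alpha)$) is the right family of ideas, and the opening steps (the identity $N\cdot T(S)=\sum_\xi \hat f(\xi)^2\hat f(-2\xi)$, the extraction of a large coefficient $|\hat f(\xi_0)|\gtrsim \alpha^2 N$) are correct. But the sketch has a fatal gap at exactly the step that constitutes Heath-Brown's contribution: the claim that a ``Bogolyubov-type argument'' produces, inside the Bohr set $B(R_\eta,1/4)$ of rank $R=\#R_\eta=\mathrm{poly}(1/\alpha)$, a genuine arithmetic progression of length $M$ divided only by a polynomial in $1/\alpha$. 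This is false. In $\mathbb{Z}/N\mathbb{Z}$, a Bohr set of rank $R$ and radius $\rho$ is only guaranteed to contain progressions of length about $\rho N^{1/R}$ (one needs a single common difference $q$ with $\|q\xi/N\|$ small \emph{simultaneously} for all $\xi\in R_\eta$, which is a simultaneous Dirichlet approximation and costs the exponent $1/R$), and this is essentially sharp when the frequencies are dissociated. With $R=\mathrm{poly}(1/\alpha)$ the available length is $N^{\alpha^{O(1)}}$, an incomparably worse loss than a multiplicative $\mathrm{poly}(1/\alpha)$ factor. The ``lose only a bounded factor of the ambient size per increment'' behaviour you invoke is a finite-field phenomenon (Meshulam's argument in $\mathbb{F}_3^n$, where Bohr sets are subspaces of bounded codimension); the impossibility of importing it cheaply into $\{0,\dots,M\}$ is precisely why the integer problem is hard.

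The gap is not cosmetic, because with the correct length loss $M\mapsto M^{c/R}$ your iteration no longer yields the theorem: constant-factor (let alone $+c\alpha^A$) increments over the required number of steps multiply $\log M$ by $\exp(-C\log^2(1/\alpha))$ in total, and the contradiction only forces $\alpha\lesssim \exp(-c\sqrt{\log\log M})$ --- better than Roth's $1/\log\log M$, but well short of the polylogarithmic bound $(\log M)^{-c}$ being claimed. A second, related error: Roth's increment is $\alpha\mapsto \alpha+c\alpha^2$, not $\alpha\mapsto\alpha(1+c)$ as you state; if Roth's argument really gave a constant-factor gain per square-root step, it would already imply the polylogarithmic bound ($O(\log(1/\alpha))$ steps, each halving $\log M$), and Heath-Brown's paper would be unnecessary. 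This misstatement hides where the difficulty actually sits: the substance of Heath-Brown's (and Szemerédi's) work is to obtain a large density increment while paying far less than $M^{1/R}$ in length, which requires exploiting extra structure --- batching the frequencies collected across many stages before passing to a subprogression, and using the additive structure of the large spectrum --- none of which appears, even in outline, in your proposal. As written, the sketch treats the heart of the theorem as routine and would only prove a strictly weaker bound.
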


For the sake of clarity, we have decomposed the proof of Theorem \ref{THM0} in two steps. We will denote by $C(A,\delta)$  the smallest number of balls of radius $\delta$ needed to cover the set $A$.

\subsection*{Decomposition of the $\Omega$-Kakeya set}

To begin with, we may suppose that $\Omega$ is contained in a small spherical cap ; concretely we suppose that for any $e = (e_1,\dots,e_n) \in \Omega$ we have $e_n > \frac{1}{2}$. We fix then $d < d_\Omega$ arbitrarily close and we use Frostman's Lemma to obtain an probability $\mu$ such that $S_\mu \subset \Omega$ and also $[\mu]_d \leq 1$. We consider then a $\Omega$-Kakeya set $X$ and we suppose that $X$ is contained in $[0,1]^n$. For any $e \in \Omega$ we will denote by $T_e$ a unit segment oriented along $e$ contained in $X$. Finally we fix $s > d_X$ and we will prove that we have $$s \geq \frac{6}{11}d +1.$$

We fix $\epsilon\in (0,1)$ arbitrarily small and we let $ \eta \in (0,1)$ such that defining $\delta_k = 2^{-2^{\eta k}}$ we have for any $k \geq 1$, $$ \delta_{k}^{s+\epsilon} \leq \delta_{k+1}^s.$$ Since $s > d_X$, for arbitrary large $k_0$, we can cover $X$ by a countable collection of balls $\{ B_i \}_{i \in \mathcal{I}}$ and such that for any $i \in \mathcal{I}$, we have $\text{diam}( B_i )  < \delta_{k_0}$ and $ \sum_{i \in \mathcal{I}} \text{diam}( B_i)^s < 1$. In addition, we take $k_0$ so large that we have $$k_0^4  \max( \delta_{k_0}^\eta ,\delta_{k_0}^{\eta d})  < 1.$$ We denote by $Y$ the union of the balls $\{B_i \}_{i \in \mathcal{I}}$ \textit{i.e.} $$Y := \bigcup_{i \in \mathcal{I}} B_i $$ and for $k \geq k_0$ we will denote by $ \mathcal{I}_k := \{ i \in \mathcal{I}_k : \delta < \text{diam}(B_i) \leq \delta \}$ and also $Y_k := \bigcup_{ i \in \mathcal{I}_k } B_i$. We can control the size of $\#\mathcal{I}_k$.

\begin{claim}
We have $ \#\mathcal{I}_k \delta_k^{s+\epsilon} < 1$.
\end{claim}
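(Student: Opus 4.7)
The plan is essentially a dyadic pigeonhole: I would combine the packing estimate $\sum_{i \in \mathcal{I}} \mathrm{diam}(B_i)^s < 1$ coming from the choice of cover with the multiplicative comparison $\delta_k^{s+\epsilon} \leq \delta_{k+1}^s$ built into the definition of $\eta$.

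First, reading the definition of $\mathcal{I}_k$ in the natural way, namely $\mathcal{I}_k = \{i \in \mathcal{I} : \delta_{k+1} < \mathrm{diam}(B_i) \leq \delta_k\}$, every ball indexed in $\mathcal{I}_k$ has $\mathrm{diam}(B_i)^s > \delta_{k+1}^s$. Extracting only the terms in $\mathcal{I}_k$ from the covering sum therefore yields
$$\#\mathcal{I}_k \cdot \delta_{k+1}^s \;\leq\; \sum_{i \in \mathcal{I}_k} \mathrm{diam}(B_i)^s \;\leq\; \sum_{i \in \mathcal{I}} \mathrm{diam}(B_i)^s \;<\; 1,$$
which is the intermediate estimate $\#\mathcal{I}_k < \delta_{k+1}^{-s}$.

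Second, I would invoke the defining property of the sequence, $\delta_k^{s+\epsilon} \leq \delta_{k+1}^s$, equivalently $\delta_{k+1}^{-s} \leq \delta_k^{-(s+\epsilon)}$, and multiply the previous inequality through by $\delta_k^{s+\epsilon}$ to obtain
$$\#\mathcal{I}_k \cdot \delta_k^{s+\epsilon} \;\leq\; \delta_{k+1}^{-s} \cdot \delta_k^{s+\epsilon} \;\leq\; 1,$$
with the strict inequality inherited from the strict covering bound.

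There is no genuine obstacle here; the claim is pure bookkeeping. The design choice worth flagging, already made in setting up the proof, is the doubly-exponential sequence $\delta_k = 2^{-2^{\eta k}}$, whose sole role at this stage is to absorb the loss incurred when passing between two consecutive scales into the small exponent $\epsilon$. The substantive use of this population control will come only later, when the bound $\#\mathcal{I}_k \lesssim \delta_k^{-(s+\epsilon)}$ is fed into the Sum-Difference theorem and the Heath--Brown input of Bourgain's arithmetic method.
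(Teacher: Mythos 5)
Your proof is correct and is essentially identical to the paper's: both isolate the terms of $\mathcal{I}_k$ in the covering sum to get $\#\mathcal{I}_k\,\delta_{k+1}^s \leq \sum_{i\in\mathcal{I}_k}\mathrm{diam}(B_i)^s < 1$ and then apply $\delta_k^{s+\epsilon}\leq\delta_{k+1}^s$. Your reading of the (typo-marred) definition of $\mathcal{I}_k$ as $\{i : \delta_{k+1} < \mathrm{diam}(B_i) \leq \delta_k\}$ is the intended one.
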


\begin{proof}
By definition of $\mathcal{I}_k$ and since we have $\delta_{k}^{s+\epsilon} \leq \delta_{k+1}^s$ and $\sum_{i \in \mathcal{I}}  \text{diam}( B_i)^s < 1 $, we obtain $$  \#\mathcal{I}_k \delta_k^{s+\epsilon} \leq  \#\mathcal{I}_k \delta_{k+1}^s  \leq \sum_{i \in \mathcal{I}_k} \text{diam}(B_i)^s < 1.$$
\end{proof}

\subsubsection*{step 1 : refinement to a single scale}

We wish to work at a single scale with respect to this covering. Hence we are going to exhibit a $k \geq k_0$ such that there is a specific subset $\Omega_k \subset \Omega$ adapted to the covering $\{ B_i \}_{i \in \mathcal{I}_k}$ : on one hand $\Omega_k$ is large and on the other hand, for any $e \in \Omega_k$, the unit segment $T_e \subset X$ is well covered by the balls $\{B_i\}_{i \in \mathcal{I}_k}$.

\begin{claim}
There exists $k \geq k_0$ and $\Omega_k \subset \Omega$ such that for any $e \in \Omega_k$, $$|T_e \cap Y_k|_1 \geq \frac{1}{k^2}$$ and also $\mu(\Omega_k) > \frac{1}{k^2}$.
\end{claim}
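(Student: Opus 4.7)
The plan is to set
$$\Omega_k := \bigl\{ e \in \Omega : |T_e \cap Y_k|_1 \geq 1/k^2 \bigr\},$$
so that the pointwise lower bound on $|T_e \cap Y_k|_1$ is built into the definition, and the only remaining task is to produce some $k \geq k_0$ for which $\mu(\Omega_k) > 1/k^2$. I would establish this by contradiction, using a simple mass-splitting across scales in the spirit of Borel--Cantelli.

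Assume for contradiction that $\mu(\Omega_k) \leq 1/k^2$ for every $k \geq k_0$. Because $k_0$ was chosen very large (and if needed I would enlarge it further, which is harmless since the earlier conditions on $k_0$ only require it to be sufficiently large), the tail bound $\sum_{k \geq k_0} 1/k^2 < 1$ holds, and countable subadditivity gives
$$ \mu\Bigl( \bigcup_{k \geq k_0} \Omega_k \Bigr) \leq \sum_{k \geq k_0} \mu(\Omega_k) \leq \sum_{k \geq k_0} \frac{1}{k^2} < 1 = \mu(\Omega).$$
Hence there exists a direction $e \in \Omega$ that belongs to no $\Omega_k$, which means $|T_e \cap Y_k|_1 < 1/k^2$ for every $k \geq k_0$.

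To close the argument, I would use that the families $\mathcal{I}_k$ partition $\mathcal{I}$: since every ball $B_i$ in the cover has diameter strictly less than $\delta_{k_0}$ and $\delta_k \to 0$, each index $i$ belongs to exactly one $\mathcal{I}_k$ with $k \geq k_0$, so $Y = \bigcup_{k \geq k_0} Y_k$. Because $T_e \subset X \subset Y$ and $|T_e|_1 = 1$, one-dimensional subadditivity yields
$$ 1 = |T_e|_1 \leq \sum_{k \geq k_0} |T_e \cap Y_k|_1 < \sum_{k \geq k_0} \frac{1}{k^2} < 1,$$
which is the desired contradiction. There is no substantive obstacle here beyond checking that the tail sum is genuinely less than $1$, which is automatic after possibly enlarging $k_0$; the argument is essentially a pigeonhole on the scale parameter $k$, exploiting that a unit segment covered by countably many sets must be well-covered by at least one scale's worth of them.
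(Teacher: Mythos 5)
Your proof is correct and follows essentially the same route as the paper: define $\Omega_k$ by the pointwise condition, assume $\mu(\Omega_k)\leq 1/k^2$ for all $k\geq k_0$, sum the tail to get total measure strictly below $\mu(\Omega)=1$, and derive a contradiction from the fact that $Y=\bigcup_{k\geq k_0}Y_k$ covers the unit segment $T_e$. Your final subadditivity display simply makes explicit the step the paper compresses into ``this is not possible since $Y$ covers $X$.''
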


\begin{proof}
If this is not the case, then for any $k \geq k_0$ we have $$\mu \left( \{ e \in \Omega : |T_e \cap Y_k|_1 \geq \frac{1}{k^2} \} \right) \leq \frac{1}{k^2}.$$ Hence we have $$\mu \left( \{ e\in \Omega : \exists k \geq k_0, |T_e \cap Y_k|_1 \geq \frac{1}{k^2}  \} \right) \leq \sum_{k \geq k_0} \frac{1}{k^2} < \mu( \Omega) $$ and so there is $e \in \Omega$ such that $|T_e \cap Y_k|_1 < \frac{1}{k^2}$ for any $k \geq k_0$. This is not possible since $Y$ covers $X$ and so $T_e$ in particular.
\end{proof}

\subsubsection*{step 2 : slicing $\mathbb{R}^n$ at two scales}

We fix such a $k$ and we let $\delta := \delta_k$. Recall that since $k \geq k_0$ and that we can choose $k_0$ arbitrarily large, the same is true for $k$ \textit{i.e.} the integer $k$ can be chosen arbitrarily large. Also observe that by definition we have $$k \simeq \log\log( \delta^{-\eta}).$$ Now we fix two integers $N,M \in \mathbb{N}$ such that $$(N,M) \simeq (\delta^{\eta-1},\delta^{-\eta}).$$ We are going to slice $\mathbb{R}^n$ at two different scales ($\delta$ and $\delta^\eta$) along the vector $(0,\dots,1)$. Precisely for $j \leq N$ and $m \leq M$, we define  $$A_{j,m} := \{ x=(x_1,\dots,x_n) \in \mathbb{R}^n : j\delta + mN\delta \leq x_n \leq (j+1)\delta + mN\delta \}$$ and $A_j := \bigcup_{m \leq M} A_{j,m}$.

\begin{claim}
For any $e \in \Omega_k$ and $j \leq N$, we have $|T_e \cap A_j|_1 \simeq M\delta \simeq \frac{1}{N}$.
\end{claim}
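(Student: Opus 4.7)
The plan is to parameterize $T_e$ by arc length and reduce to a one-dimensional problem along the $x_n$-axis. Write $T_e = \{a+te : t \in [0,1]\}$ for some $a \in \mathbb{R}^n$; since $|e|=1$, the one-dimensional Hausdorff measure of $T_e \cap A_j$ equals the Lebesgue measure of $\{t \in [0,1] : a+te \in A_j\}$. Because $\Omega$ sits in the spherical cap $\{e_n > 1/2\}$, the map $t \mapsto a_n + te_n$ is affine with slope $e_n \in (1/2,1]$, so counting and measuring preimages of the slabs $A_{j,m}$ is a one-dimensional task with uniform bi-Lipschitz constants.

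The slabs $A_{j,m}$ for $0 \leq m \leq M$ are parallel, of thickness $\delta$ in the $x_n$-direction, with consecutive left edges spaced exactly $N\delta$ apart. Pulling back through the coordinate $x_n(\cdot)$, the preimage in $t$ of each $A_{j,m}$ is an interval of length $\delta/e_n \simeq \delta$, and consecutive preimage intervals are separated by $N\delta/e_n \simeq N\delta$. Recalling $N \simeq \delta^{\eta-1}$ and $M \simeq \delta^{-\eta}$, this spacing is $\simeq 1/M$, and all implicit constants are uniform in $e \in \Omega$ thanks to $e_n > 1/2$.

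It then remains to count how many preimage intervals meet $[0,1]$. Since they are regularly spaced with period $\simeq 1/M$ inside the window $[0,1]$ of length $1$, their number is $\simeq M$, with at most an $O(1)$ discrepancy at the two endpoints $t=0$ and $t=1$. Multiplying by the per-slab contribution $\simeq \delta$ gives $|T_e \cap A_j|_1 \simeq M\delta$, and the identity $MN\delta \simeq 1$ yields $M\delta \simeq 1/N$, as required. There is no genuine obstacle here: the only care needed is to absorb the $O(1)$ endpoint defect into the $\simeq$, which is permissible precisely because $M$ is large and the slope $e_n$ is bounded below by $1/2$ uniformly on $\Omega$.
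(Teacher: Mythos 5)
Your argument is correct and is essentially the paper's proof written out in full: the paper disposes of this claim in one line by noting that $e_n > \frac{1}{2}$ on $\Omega$, and your parameterization by arc length, the pullback of the slabs $A_{j,m}$ to intervals of length $\delta/e_n$ spaced $N\delta/e_n \simeq 1/M$ apart, and the count of $\simeq M$ such intervals meeting $[0,1]$ is exactly the computation that remark is standing in for. No discrepancy to report.
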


\begin{proof}
The claim comes from the fact that we have supposed that for any $e=(e_1,\dots,e_n) \in \Omega$ we have $e_n > \frac{1}{2}$.
\end{proof}

By definition of $\Omega_k$, we also have the following estimate $$ \frac{1}{k^4} \leq \frac{\mu(\Omega_k)}{k^2}  \leq \int_{\Omega_k}  | Y_k \cap T_e |_1 d\mu(e)  = \sum_{ j \leq N } \int_{\Omega_k} |Y_k \cap T_e \cap A_j |_1 d\mu(e). $$ We define then the subset $J \subset \{0,\dots,N\}$ as $$ J = \{ j \leq N :  \int_{\Omega_k} |Y_k \cap T_e \cap A_j |_1 \geq \frac{1}{2Nk^4}  \}.$$ The following claim states that this set $J$ is not too small in $\{0,\dots,N\}$.

\begin{claim}
We have $ \#J \gtrsim \delta^{\eta} N$
\end{claim}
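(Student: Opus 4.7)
The plan is a two-step pigeonhole applied directly to the lower bound
\[
\sum_{j \leq N} \int_{\Omega_k} |Y_k \cap T_e \cap A_j|_1 \, d\mu(e) \geq \frac{1}{k^4}
\]
that was just established (by combining $\mu(\Omega_k) > k^{-2}$ with $|T_e \cap Y_k|_1 \geq k^{-2}$ on $\Omega_k$). The idea is simply that if $J$ were too small then the bad indices $j \notin J$ could not account for the required amount of mass.

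First I would split the sum according to $j \in J$ and $j \notin J$. By the defining threshold, each term with $j \notin J$ is strictly less than $\frac{1}{2Nk^4}$; summing over the at most $N+1$ bad indices this contributes at most $\frac{N+1}{2Nk^4}$, hence at most $\frac{3}{4k^4}$ for $N \geq 2$. Subtracting from the total, the $J$-contribution must satisfy
\[
\sum_{j \in J} \int_{\Omega_k} |Y_k \cap T_e \cap A_j|_1 \, d\mu(e) \gtrsim \frac{1}{k^4}.
\]

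Next I would apply the preceding claim, which gives $|T_e \cap A_j|_1 \simeq 1/N$ for every $e \in \Omega_k$ and every $j$. Combined with $\mu(\Omega_k) \leq 1$, this yields the uniform upper bound $\int_{\Omega_k} |Y_k \cap T_e \cap A_j|_1 \, d\mu(e) \lesssim 1/N$. Dividing the two estimates gives $\#J \gtrsim N/k^4$.

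It remains to compare $N/k^4$ with $\delta^{\eta} N$, and this is where the smallness assumption $k_0^4 \max(\delta_{k_0}^\eta, \delta_{k_0}^{\eta d}) < 1$ imposed on the base scale comes in. Because $\delta_k = 2^{-2^{\eta k}}$ decays double-exponentially while $k^4$ grows only polynomially, the sequence $k \mapsto k^4 \delta_k^\eta$ is decreasing for $k \geq k_0$, so $k^4 \delta^\eta \leq k_0^4 \delta_{k_0}^\eta < 1$. Consequently $1/k^4 \geq \delta^\eta$ and the desired bound $\#J \gtrsim \delta^\eta N$ follows. I expect no real obstacle here; the argument is a straightforward double pigeonhole, and the only bookkeeping point is checking that the smallness condition on $k_0$ actually propagates to every $k \geq k_0$, which is immediate from the monotonicity just noted.
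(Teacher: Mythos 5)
Your argument is correct and is exactly the ``reverse Markov inequality'' the paper invokes in its one-line proof: the indices outside $J$ contribute at most $\tfrac{N+1}{2Nk^4}$ to a total of at least $\tfrac{1}{k^4}$, each index in $J$ contributes at most $O(1/N)$ by the preceding claim and $\mu(\Omega_k)\le 1$, and the choice of $k_0$ with $k_0^4\delta_{k_0}^\eta<1$ (propagated by the double-exponential decay of $\delta_k$) converts $N/k^4$ into $\delta^\eta N$. No gaps; this is the same approach as the paper, just written out in full.
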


\begin{proof}
The proof comes from a reverse Markov inequality.
\end{proof}

Now for each $j \in J$, we extract a subset $\Omega_{k,j}$ from $\Omega_k$ in the same fashion that we have extracted $\Omega_k$ from $\Omega$. The proof is the same than for Claim 2.

\begin{claim}
For any $j \in J$, there exists $\Omega_{k,j} \subset \Omega_k$ such that for any $e \in \Omega_{k,j}$, $$|T_e \cap Y_k \cap A_j|_1 > \frac{|T_e \cap A_j|_1}{4k^4}$$ and also $\mu(\Omega_{k,j}) > \frac{ \mu(\Omega_k) }{ k^2}$.
\end{claim}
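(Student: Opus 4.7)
The plan is to run a reverse Markov (first-moment) argument on $f(e) := |T_e \cap Y_k \cap A_j|_1$ with respect to $\mu$ over $\Omega_k$, in direct parallel with the proof of Claim 2 but localized to the single slab $A_j$ instead of the whole segment.

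The two inputs I will feed into the reverse Markov are: from Claim 3, $g(e) := |T_e \cap A_j|_1 \simeq M\delta \simeq 1/N$ uniformly in $e \in \Omega_k$, with implicit constants depending only on the vertical-cap condition $e_n > 1/2$; and from the definition of $J$, for any fixed $j \in J$ one has $\int_{\Omega_k} f(e)\, d\mu(e) \geq 1/(2Nk^4)$. The natural candidate is then
$$\Omega_{k,j} := \{e \in \Omega_k : f(e) > g(e)/(4k^4)\},$$
and the game is to lower bound $\mu(\Omega_{k,j})$ by splitting the integral over this set and its complement in $\Omega_k$. On the complement one has $f \leq g/(4k^4) \lesssim 1/(Nk^4)$, while on $\Omega_{k,j}$ one only has $f \leq g \lesssim 1/N$. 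Combining these two bounds with the lower bound on $\int f\, d\mu$ yields
$$\frac{1}{2Nk^4} \lesssim \frac{\mu(\Omega_{k,j})}{N} + \frac{\mu(\Omega_k)}{Nk^4},$$
from which $\mu(\Omega_{k,j}) \gtrsim 1/k^4$. Since $\mu(\Omega_k) > 1/k^2$ by Claim 2, this is of the correct order $\mu(\Omega_k)/k^2$.

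The main technical point to watch is the bookkeeping of constants: because $g(e)$ is only pinched between $M\delta$ and $2M\delta$ under the cap condition $e_n > 1/2$, the reverse-Markov computation naturally produces $\mu(\Omega_{k,j}) \gtrsim 1/k^4$, which matches $\mu(\Omega_k)/k^2$ cleanly only when $\mu(\Omega_k)$ is near its guaranteed lower bound $1/k^2$. If $\mu(\Omega_k)$ happens to be much larger, one recovers the claimed form either by tightening the initial cap (replace $1/2$ by $1 - \epsilon$) so that $g(e) = M\delta(1 + o(1))$ uniformly, or by enlarging the constant $4$ in the threshold $1/(4k^4)$ to an appropriate absolute constant. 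Either adjustment is harmless for the subsequent Bourgain-style arithmetic argument, since only the polynomial dependence on $k$ will matter downstream.
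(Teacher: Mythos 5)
Your reverse-Markov split of $\int_{\Omega_k} |T_e \cap Y_k \cap A_j|_1\, d\mu$ over $\Omega_{k,j}$ and its complement, fed by the defining lower bound of $J$ and by Claim 3's uniform estimate $|T_e\cap A_j|_1 \simeq 1/N$, is the right mechanism here, and it is arguably better matched to the situation than the paper's own one-line proof, which simply defers to Claim 2: Claim 2's argument is a union bound over the countable family of scales $k$ followed by a contradiction with the covering of a full segment, and it has no direct analogue for a single fixed $j$, whereas your computation is exactly what the threshold $\frac{1}{2Nk^4}$ in the definition of $J$ is designed to feed.

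Two remarks on the constants, one of which is a genuine (if minor) gap. First, as you correctly flag, the complement term $\frac{1}{4k^4}\int_{\Omega_k\setminus\Omega_{k,j}}|T_e\cap A_j|_1\,d\mu$ is of the same order $\frac{\mu(\Omega_k)}{Nk^4}$ as the left-hand side $\frac{1}{2Nk^4}$, so with the literal constant $4$ the subtraction can degenerate; enlarging $4$ to an absolute constant depending on the implicit constants in $MN\delta\simeq 1$ and on the cap condition repairs this, and that change is harmless downstream. Second, and this is where your write-up is too optimistic: after that repair you obtain $\mu(\Omega_{k,j})\gtrsim 1/k^4$, and neither tightening the cap nor enlarging the threshold constant upgrades this to the literal conclusion $\mu(\Omega_{k,j}) > \mu(\Omega_k)/k^2$ when $\mu(\Omega_k)$ is much larger than its guaranteed lower bound $1/k^2$ (for instance $\mu(\Omega_k)=1$ would require $\mu(\Omega_{k,j})>1/k^2$, which $1/k^4$ does not give). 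You should state plainly that you prove the weaker bound $\mu(\Omega_{k,j})\gtrsim 1/k^4$ rather than suggest the stated form can be recovered. This residual discrepancy is a defect of the claim's phrasing rather than of your argument: the only property of $\Omega_{k,j}$ invoked later (in the lower bound for $\#\{a-b : (a,b)\in G\}$) is precisely $\mu(\Omega_{k,j})\gtrsim 1/k^4$, which your argument does deliver.
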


\subsubsection*{step 3 : conclusion}

Observe that for $|j-j'| > 2$, the sets $Y_k \cap A_j$ and $Y_k \cap A_{j'}$ are separated by a distance greater than $2\delta$. Hence, on one hand we have $$\sum_{j \in J} C(Y_k \cap A_j,\delta) \lesssim C(Y_k,\delta) \lesssim \delta^{-s-\epsilon}.$$  On the other hand, suppose that for any $j \in J$ we have $$C(Y_k \cap A_j,\delta) \gtrsim \delta^{ \frac{6}{11}(2\eta-1)d }.$$ In this case we obtain $$\sum_{j \in J} C(Y_k \cap A_j,\delta) \gtrsim \#J \times \delta^{ \frac{6}{11}(2\eta-1)d} \gtrsim \delta^{  \frac{6}{11}(2\eta-1)d + (2\eta-1)}.$$ Since $\delta$ is small enough, we get  $\frac{6}{11}(1 -2\eta)d  - 2\eta + 1 \leq   s + \epsilon$. Taking $\epsilon$ and $\eta$ arbitrarily small we conclude that $$s \geq \frac{6}{11}d +1.$$

\subsection*{Lower bound for $C(Y_k \cap A_j,\delta)$}

Hence we are left to prove that we have, for any $j \in J$, the following bound $$C(Y_k \cap A_j,\delta) \gtrsim \delta^{ \frac{6}{11}(2\eta-1)d }.$$ We will start by applying Heath-Brown's Theorem and we will use thereafter the Sum-difference Theorem to obtain a bound on $C(Y_k \cap A_j,\delta)$.

\subsubsection*{step 1 : application of Heath-Brown's Theorem}

For any $j \in J$ and any $e \in \Omega_{k,j}$, we consider the following subset of $\{0,\dots,M\}$ $$K(e,j) := \{ m \leq M : Y_k \cap T_e \cap A_{j,m} \neq \emptyset \}.$$ The following claim states that $K(e,j)$ contains a lot of element in $\{0,\dots,M\}$.

\begin{claim}
We have $\#K(e,j) \gtrsim \frac{M}{\log\log(M)}$.
\end{claim}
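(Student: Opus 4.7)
The plan is to combine Claim 5 (a lower bound for $|T_e \cap Y_k \cap A_j|_1$) with the trivial geometric upper bound $|T_e \cap A_{j,m}|_1 \lesssim \delta$, and divide.

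First I would fix $j \in J$ and $e = (e_1,\dots,e_n) \in \Omega_{k,j}$, and invoke the spherical-cap reduction $e_n > \tfrac{1}{2}$: since each slab $A_{j,m}$ has thickness $\delta$ in the $x_n$-direction, a unit segment oriented along $e$ crosses any one slab in length at most $\delta/e_n < 2\delta$. Hence $|T_e \cap Y_k \cap A_{j,m}|_1 \leq |T_e \cap A_{j,m}|_1 \lesssim \delta$ for every $m \leq M$.

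Next, Claims 3 and 5 together yield $|T_e \cap Y_k \cap A_j|_1 \gtrsim M\delta/k^4$. I would decompose this intersection over the pairwise disjoint slabs $A_{j,m}$ and note that only the indices $m \in K(e,j)$ contribute to the sum. Combining with the pointwise bound of the previous paragraph,
\[
\frac{M\delta}{k^4} \;\lesssim\; \sum_{m \in K(e,j)} \bigl|T_e \cap Y_k \cap A_{j,m}\bigr|_1 \;\lesssim\; \#K(e,j) \cdot \delta,
\]
so that $\#K(e,j) \gtrsim M/k^4$.

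Finally I would translate $k$ into $M$: since $\delta = 2^{-2^{\eta k}}$ and $M \simeq \delta^{-\eta}$, one has $k \simeq \log\log(\delta^{-1}) \simeq \log\log M$, whence the bound rewrites as $\#K(e,j) \gtrsim M/(\log\log M)^4$. This dominates $M/(\log M)^c$ for $M$ large, which is precisely the hypothesis needed to apply Heath-Brown's theorem in the next step. There is no real obstacle here: the statement is an average-to-pointwise conversion, and the only nontrivial input is the slope condition $e_n > \tfrac{1}{2}$, which is exactly why the set of directions was restricted to a small spherical cap at the very beginning of the argument.
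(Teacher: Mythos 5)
Your proof is correct and follows essentially the same route as the paper: bound $|T_e\cap Y_k\cap A_j|_1$ below by $\tfrac{1}{4Nk^4}\simeq M\delta/k^4$ via Claims 3 and 5, bound it above by $\#K(e,j)\cdot\delta$ using the slab decomposition and $e_n>\tfrac12$, and convert $k\simeq\log\log M$. You are in fact slightly more careful than the paper, which writes $M/k^4\simeq M/\log\log M$ where the honest bound is $M/(\log\log M)^4$; as you note, this still comfortably exceeds the $M/(\log M)^c$ threshold needed for Heath-Brown.
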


\begin{proof}
We have $$\#K(e,j) \times \delta \gtrsim | Y_k \cap T_e \cap A_j |_1 > \frac{1}{4Nk^4}.$$ Hence $$ \#K(e,j) \gtrsim \frac{M}{k^4} \simeq \frac{M}{\log \log(M)}.$$ 
\end{proof}

Since we can take $M$ arbitrary large and that $K(e,j)$ is quite large in $\{0,\dots,M \}$, we are able to exhibit arithmetic progressions of three terms in $K(e,j)$ using the Theorem of Heath-Brown.

\begin{claim}
For any $j \in J$ and $e \in \Omega_{k,j}$, the set $K(e,j)$ contains a subset of the form $$\{ m, m+m' , m+2m' \} \subset K(e,j).$$ 
\end{claim}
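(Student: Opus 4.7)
The plan is a direct application of Heath-Brown's Theorem to the set $S := K(e,j) \subset \{0,\dots,M\}$. Two conditions must be verified: the size hypothesis $\#S \geq M/\log(M)^c$ (where $c>0$ is the absolute constant from Heath-Brown's statement) and the condition $M > M_0$.

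For the size condition, the previous claim already provides
$$\#K(e,j) \gtrsim \frac{M}{\log\log(M)}.$$
Since for any fixed constant $c>0$ one has the elementary asymptotic comparison $\log\log(M) \ll \log(M)^c$ as $M \to \infty$, the required inequality $\#K(e,j) \geq M/\log(M)^c$ holds for all $M$ large enough. For the bound $M > M_0$, recall that $M \simeq \delta^{-\eta}$ with $\delta = \delta_k = 2^{-2^{\eta k}}$; since $k$ was chosen $\geq k_0$ and $k_0$ can be taken arbitrarily large at the very beginning of the argument, both $M > M_0$ and the comparison above are automatic.

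Once these two conditions are in place, Heath-Brown's Theorem applied to $S = K(e,j)$ produces integers $m$ and $m' \geq 1$ such that $\{m, m+m', m+2m'\} \subset K(e,j)$, which is exactly the conclusion sought. There is no genuine obstacle here: the claim is essentially a repackaging of the previous quantitative lower bound on $\#K(e,j)$ together with the statement of Heath-Brown's Theorem, and the only thing to check is the trivial comparison between $\log\log(M)$ and $\log(M)^c$, which is why we chose the double exponential scaling $\delta_k = 2^{-2^{\eta k}}$ in the first place (so that $k \simeq \log\log(\delta^{-\eta}) \simeq \log\log(M)$, matching the loss in Heath-Brown's theorem).
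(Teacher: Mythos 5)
Your proposal is correct and matches the paper's (essentially unwritten) argument: the paper simply invokes Heath-Brown's Theorem after noting that $M$ can be taken arbitrarily large and that $\#K(e,j)\gtrsim M/\log\log(M)$, which dominates $M/\log(M)^c$ for large $M$. You have merely spelled out the two hypotheses explicitly, which is exactly what is needed.
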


Hence for any $j \in J$ and any $e \in \Omega_{k,j}$, there exists $a_e, b_e \in Y_k(n\delta) \cap T_e(n\delta) \cap A_j \cap \delta \mathbb{Z}^n$ such that $$\frac{a_e+b_e}{2} \in Y_k(n\delta) \cap T_e(n\delta) \cap A_j \cap \delta \mathbb{Z}^n$$ and $a_e,b_e$ belong to different sets $A_{j,m}$. Observe also that the sets $A_{j,m}$ for different indices $m$ are at least distant of $\simeq \delta^\eta$. Thus if $\delta$ is small enough we have $|a_e-b_e| \gtrsim \delta^\eta$. Finally, we consider the sets $A = \{ a_e : e\in \Omega_{k,j} \}$, $B = \{ b_e : e \in \Omega_{k,j} \}$ and $$G := \{ (a_e,b_e) : e\in \Omega_{k,j} \} \subset A\times B.$$ Recall that we have $$A,B \subset \delta\mathbb{Z}^n.$$

\subsubsection*{step 2 : upper bound for $\#\{ a-b :(a,b) \in G \}$}

We are going to give an upper bound and lower bound on $\#\{ a-b :(a,b) \in G \}$. Observe that the cardinal of $A,B$ and $\{ a+b :(a,b) \in G \}$ is controlled by the covering number $C(Y_k \cap A_j,\delta)$. Hence a direct application of the Sum-difference Theorem yields an upper bound on $\#\{ a-b :(a,b) \in G \}$

\begin{claim}
We have $\#\{ a-b :(a,b) \in G \}  \lesssim  C(Y_k \cap A_j,\delta)^{ \frac{11}{6} }$.
\end{claim}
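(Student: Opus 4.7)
The plan is to verify the hypotheses of the Sum-difference Theorem with $N \simeq C(Y_k \cap A_j,\delta)$ and read off the conclusion. So first I would set $N := C(Y_k \cap A_j,\delta)$ (up to an absolute constant) and check the three requirements: $A,B \subset \delta\mathbb{Z}^n$ (already noted), $\#A,\#B \leq N$, and $\#\{a+b : (a,b)\in G\} \lesssim N$.

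For the cardinality bounds, every $a_e$ and every $b_e$ lies by construction in $Y_k(n\delta) \cap A_j \cap \delta\mathbb{Z}^n$. The number of $\delta$-lattice points in the $n\delta$-neighborhood of $Y_k\cap A_j$ is comparable to the minimal number of $\delta$-balls needed to cover $Y_k \cap A_j$, since each ball in an optimal covering contains only $O_n(1)$ lattice points and, conversely, balls of radius $\delta$ centered at lattice points cover the neighborhood. This gives $\#A,\#B \lesssim_n N$.

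For the sum set, I would use the key structural feature built in by Heath-Brown's theorem: the midpoint $\frac{a_e+b_e}{2}$ also lies in $Y_k(n\delta) \cap A_j \cap \delta\mathbb{Z}^n$. Hence the map $(a_e,b_e) \mapsto a_e+b_e$ factors through the doubling map applied to the set of midpoints, so $\#\{a+b : (a,b)\in G\}$ is bounded by the number of such midpoints, which by the same lattice-counting argument is $\lesssim_n N$. The Sum-difference Theorem then immediately yields $\#\{a-b : (a,b)\in G\} \leq N^{2-1/6} = N^{11/6} \lesssim C(Y_k\cap A_j,\delta)^{11/6}$.

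The only subtlety, and the step most likely to require care, is the passage between $\delta$-lattice point counts in the $n\delta$-fattening and the covering number $C(Y_k\cap A_j,\delta)$; this is a purely geometric comparison that loses only a dimensional constant, and it is exactly what makes the discretized arithmetic input (Heath-Brown, then Sum-difference) give a bound phrased in terms of the covering number that controls the final Hausdorff-dimension estimate.
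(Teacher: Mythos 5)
Your proposal is correct and takes essentially the same route as the paper, which simply observes that the cardinalities of $A$, $B$ and of $\{a+b : (a,b)\in G\}$ are all controlled by $C(Y_k\cap A_j,\delta)$ and then invokes the Sum-difference Theorem with $2-\frac{1}{6}=\frac{11}{6}$. You merely fill in the lattice-point-counting comparison and the midpoint/doubling observation that the paper leaves implicit.
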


\subsubsection*{step 3 : lower bound for $\#\{ a-b :(a,b) \in G \}$}

Finally we are also able to provide a lower bound on $\#\{ a-b :(a,b) \in G \}$ using the measure $\mu$.

\begin{claim}
We have $\delta^{(2\eta-1)d}  \lesssim \#\{ a-b :(a,b) \in G \}$.
\end{claim}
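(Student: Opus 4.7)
My plan is to count $\#\{a-b : (a,b) \in G\}$ by observing that the map $e \mapsto a_e - b_e$ from $\Omega_{k,j}$ to this set is essentially injective at angular scale $\delta^{1-\eta}$, so that the Frostman-type bound $[\mu]_d \leq 1$ converts the $\mu$-mass $\mu(\Omega_{k,j}) \gtrsim k^{-4}$ into a lower bound on the number of fibers.

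The key geometric step is to show that, for each $e \in \Omega_{k,j}$, the vector $a_e - b_e$ points in a direction very close to $e$ itself. Both $a_e$ and $b_e$ lie in the $n\delta$-neighborhood of the unit segment $T_e$, whose direction is $e$, so the component of $a_e - b_e$ orthogonal to $e$ has magnitude at most $O(n\delta)$. On the other hand, $a_e$ and $b_e$ sit in distinct slabs $A_{j,m}$ and $A_{j,m+2m'}$, whose $x_n$-coordinates differ by at least $N\delta \simeq \delta^{\eta}$; since we are in the reduced regime $e_n > \tfrac{1}{2}$, this forces $|a_e - b_e| \gtrsim \delta^{\eta}$. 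Dividing the two estimates, $(a_e - b_e)/|a_e - b_e|$ lies within angular distance $O(\delta^{1-\eta})$ of $e$. Hence for any fixed $v \in \{a-b : (a,b) \in G\}$, the preimage $\{e \in \Omega_{k,j} : a_e - b_e = v\}$ is contained in the spherical ball $B(v/|v|, C\delta^{1-\eta})$, whose $\mu$-measure is at most $C' \delta^{(1-\eta)d}$ by $[\mu]_d \leq 1$.

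Summing $\mu\{e : a_e - b_e = v\} \lesssim \delta^{(1-\eta)d}$ over the attained values of $v$ and using $\mu(\Omega_{k,j}) \gtrsim k^{-4}$ gives $\#\{a-b : (a,b) \in G\} \gtrsim k^{-4}\,\delta^{-(1-\eta)d}$. To match the stated exponent, I write $\delta^{-(1-\eta)d} = \delta^{-\eta d}\cdot \delta^{(2\eta-1)d}$, so the inequality $k^{-4}\delta^{-(1-\eta)d} \gtrsim \delta^{(2\eta-1)d}$ reduces to $k^4 \delta^{\eta d} \lesssim 1$; this is exactly the standing hypothesis $k_0^4 \max(\delta_{k_0}^{\eta}, \delta_{k_0}^{\eta d}) < 1$, which persists for all $k \geq k_0$ because $\delta_k = 2^{-2^{\eta k}}$ decays doubly-exponentially while $k^4$ grows only polynomially. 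The main delicate point is the direction-versus-length computation in the previous paragraph, which relies on $e_n > \tfrac{1}{2}$ to convert $x_n$-slab separation into genuine Euclidean separation and on a consistent orientation of $e$ relative to $a_e - b_e$; I do not foresee a substantive obstacle beyond this bookkeeping.
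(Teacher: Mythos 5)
Your proposal is correct and follows essentially the same route as the paper: both arguments show that the unit vectors $(a_e-b_e)/|a_e-b_e|$ lie within $O(\delta^{1-\eta})$ of $e$ (using the $n\delta$-neighbourhood containment and the slab separation $|a_e-b_e|\gtrsim\delta^{\eta}$), so that $\delta^{1-\eta}$-balls around these finitely many directions cover $\Omega_{k,j}$, and then combine $[\mu]_d\leq 1$ with $\mu(\Omega_{k,j})\gtrsim k^{-4}$ and the standing bound $k^4\delta^{\eta d}<1$. You merely make explicit two points the paper leaves implicit, namely the orthogonal-component computation and the final polynomial-versus-power comparison.
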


\begin{proof}
Since $a_e$ and $b_e$ are in the $n\delta$-neighbourhood of $T_e$ and $|a_e  - b_e| > \frac{\delta^\eta}{2}$ for $e \in \Omega_{k,j}$, it follows that balls roughly of radius $\delta^{1-\eta}$ centred at the unit vectors $\frac{a_e-  b_e}{|a_e - b_e|}$ (for $e \in \Omega_{k,j}$) cover $\Omega_{k,j}$. As $\mu(\Omega_{k,j})> \frac{1}{4k^4}$,  this implies $$\#(A-B) \gtrsim \frac{\delta^{(\eta-1)d}}{k^4} \gtrsim \delta^{(2\eta-1)d }.$$
\end{proof}

Hence for any $j \in J$, we have $$ C(Y_k \cap A_j,\delta) \gtrsim \delta^{ \frac{6}{11}(2\eta-1)d }.$$ This concludes the proof of Theorem \ref{THM0}.

{}

\end{document}